\documentclass{article}
\usepackage [utf8] {inputenc}
\usepackage{mathtools}
\usepackage{amsthm}
\usepackage{amssymb}
\usepackage{hyperref}

\newtheorem{theorem}{Theorem}[section]
\newtheorem{proposition}[theorem]{Proposition}
\newtheorem{lemma}[theorem]{Lemma}
\newtheorem{corollary}[theorem]{Corollary}

\DeclareMathOperator{\Sym}{Sym}
\DeclareMathOperator{\End}{End}
\DeclareMathOperator{\Aut}{Aut}
\DeclareMathOperator{\GL}{GL}

\title{Polynomial-time isomorphism test for $k$-generated extensions of abelian groups}
\author{Saveliy V. Skresanov}
\date{}

\begin{document}
\maketitle

\begin{abstract}
	The group isomorphism problem asks whether two finite groups given by their Cayley tables
	are isomorphic or not. Although there are poly\-nomial-time algorithms for some specific group classes,
	the best known algorithm for testing isomorphism of arbitrary groups of order \( n \) has time complexity \( n^{O(\log n)} \).
	We consider the group isomorphism problem for some extensions of abelian groups by \( k \)-generated groups for bounded \( k \).
	In particular, we prove that one can test isomorphism of abelian-by-cyclic extensions in polynomial time,
	generalizing a 2009 result of Le Gall for coprime extensions. As another application, we give a polynomial-time
	isomorphism test for abelian-by-simple group extensions, generalizing a 2017 result of Grochow and Qiao for central extensions.
	The main novelty of the proof is a polynomial-time algorithm for computing the unit group of a finite ring, which might be of independent interest.
\end{abstract}

\section{Introduction}

The \emph{group isomorphism problem} in complexity theory is a decision problem which asks if two finite groups
given by their Cayley tables are isomorphic or not. Since a group of order \( n \) has at most \( \log n \) generators
(all logarithms are base~\( 2 \) in this work), there is a trivial algorithm which tests isomorphism (and in fact, enumerates all isomorphisms)
between two groups of order \( n \) in time \( n^{O(\log n)} \), see~\cite{miller}.
The group isomorphism problem is polynomial-time reducible to the graph isomorphism problem, so since Babai's 2015 quasipolynomial
algorithm for the latter task~\cite{babai}, group isomorphism stands as a natural obstacle to further reducing complexity of graph isomorphism.

There have been many improvements for the group isomorphism problem for particular group classes, see, for example, the overview in~\cite{grochow}.
Babai, Codenotti and Qiao showed~\cite{trivrad} that isomorphism of groups with no normal abelian subgroups can be tested in polynomial time,
in particular, their result highlights the importance of studying the group isomorphism problem for extensions of abelian groups.
To fix our terminology, we say that a group \( G \) with a normal subgroup \( A \) is an \emph{extension} of \( A \) by \( G/A \);
we say that an extension is \emph{coprime}, if \( |A| \) is coprime to \( |G/A| \).
Le Gall proved~\cite{legall} that the group isomorphism problem is solvable in polynomial time for coprime abelian-by-cyclic extensions,
Qiao, Sarma and Tang~\cite{qiao} generalized that result to coprime extensions of abelian groups by \( k \)-generated groups for bounded \( k \),
where time complexity of the algorithm depended on \( k \) exponentially. The most general result of this flavor is by Babai and Qiao~\cite{absyl}
which gives a polynomial isomorphism test for groups with abelian Sylow towers of normal subgroups; the key step in their algorithm
resolves the isomorphism problem for certain coprime extensions of abelian groups~\cite[Theorem~1.2]{absyl}.

One of the reasons why there has been so much progress for coprime extensions is the fact that such extensions are \emph{split}:
if \( G \) is a coprime extension of \( A \) by \( G/A \), then by Schur--Zassenhaus theorem there exists a subgroup \( H \leq G \), \( H \simeq G/A \),
such that \( G \) is a semidirect product \( A \rtimes H \). Moreover, when \( A \) is abelian, one can often utilize results from representation theory
to study the action of \( H \) on~\( A \). For example, by~\cite[Theorem~1.2]{absyl}, if \( A \) is abelian and the full automorphism group of \( H \) is known,
then the full automorphism group of the coprime extension \( A \rtimes H \) can be computed in polynomial time, which essentially reduces
the problem for the original group \( G \) to that problem for~\( H \).

It is, therefore, important to develop similar results for noncoprime extensions of abelian groups.
In general, noncoprime extensions can be nonsplit, i.e.\ they are not necessarily isomorphic to a semidirect product of some subgroups of the group.
Another difficulty comes from the fact that the isomorphism class of a noncoprime extension is determined not only by the action of the ``top group'' \( G/A \)
on the ``bottom group'' \( A \), but also by the so-called \( 2 \)-cohomology class of the extension. Substantial progress in studying the group isomorphism problem
for extensions via cohomology was made by Grochow and Qiao in~\cite{grochow}, with the most strong results for central extensions, i.e.\ when \( A \) is a subgroup
of the center. For example, it follows from~\cite[Corollary~6.3]{grochow} that the group isomorphism problem can be solved in polynomial time
for central extensions of abelian groups by nonabelian simple groups; if the bottom group \( A \) is elementary abelian, then one can also
compute the coset of isomorphisms in polynomial time. In~\cite[Theorem~6.1]{grochow} the authors obtain a general isomorphism test for central extensions
whose complexity depends on the time required to enumerate the full automorphism group of \( G/A \).

We note that despite the progress in~\cite{grochow}, still not much is known for noncoprime extensions in the case when \( A \) is not central
or elementary abelian. In this work we focus on extensions of abelian groups (not necessarily central or elementary abelian)
by \( k \)-generated groups for bounded \( k \). First, we show that if \( G \) and \( G_0 \) are two extensions
of abelian normal subgroups \( A \unlhd G \) and \( A_0 \unlhd G_0 \), then one can check in polynomial time whether an isomorphism
between top groups \( G/A \) and \( G_0/A_0 \) extends to the isomorphism between \( G \) and \( G_0 \).

\begin{theorem}\label{main}
	Let \( G \) and \( G_0 \) be finite groups of order \( n \) given by their Cayley tables.
	Suppose we are also given abelian normal subgroups \( A \unlhd G \) and \( A_0 \unlhd G_0 \),
	and an isomorphism \( \psi : G/A \to G_0/A_0 \). If \( G/A \) is \( k \)-generated, then
	we can test in time polynomial in \( n^k \) whether there exists an isomorphism \( \Phi : G \to G_0 \) such that \( \Phi(A) = A_0 \)
	and \( \Phi \) induces \( \psi \) between \( G/A \) and \( G_0/A_0 \).
	If such an isomorphism exists, then we can compute the coset of such isomorphisms in the same time.
\end{theorem}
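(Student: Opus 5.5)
The plan is to reduce the problem to linear algebra over a finite ring, plus a search over at most \( n^k \) choices. Write \( Q = G/A \) and \( Q_0 = G_0/A_0 \). The conjugation actions make \( A \) a \( \mathbb{Z}Q \)-module and \( A_0 \) a \( \mathbb{Z}Q_0 \)-module; pulling back along \( \psi \), we regard \( A_0 \) as a \( \mathbb{Z}Q \)-module \( A_0^\psi \). Any \( \Phi \) as in the statement restricts to a module isomorphism \( \alpha = \Phi|_A : A \to A_0^\psi \).

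Fix elements \( x_1, \dots, x_k \in G \) whose images generate \( Q \). Fix \( y_1, \dots, y_k \in G_0 \) with \( y_iA_0 = \psi(x_iA) \). Then \( G = \langle A, x_1, \dots, x_k \rangle \), and \( \Phi \) is determined by \( \alpha \) together with a tuple \( (a_1, \dots, a_k) \in A_0^k \) satisfying \( \Phi(x_i) = y_i a_i \). There are at most \( n^k \) such tuples, and we enumerate all of them.

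Next I write down a polynomial-size set of defining relations for \( G \) relative to \( A \). For every \( q \in Q \), choose a word \( w_q \) in the \( x_i \) representing \( q \), for instance by building a Schreier tree of \( Q \) on the generators \( x_iA \). The relations are then:
\begin{itemize}
\item the conjugation relations \( x_i a x_i^{-1} = {}^{x_i}a \) for \( a \in A \);
\item the relations \( w_q x_i = w_{qx_i} \cdot c_{q,i} \), where \( c_{q,i} \in A \) is read off from the Cayley table of \( G \).
\end{itemize}
A routine check shows that a map defined by \( x_i \mapsto y_i a_i \) and by a homomorphism \( \alpha \) on \( A \) extends to a homomorphism \( G \to G_0 \) if and only if these relations are preserved. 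The first family holds exactly when \( \alpha \) is a \( \mathbb{Z}Q \)-module map. For the second family, fix a tuple; then the elements \( d_{q,i} \in A_0 \) defined by \( w_q(y a) x_i\text{-image} = w_{qx_i}(ya) \, d_{q,i} \) are computable directly. The second family therefore says exactly that \( \alpha(c_{q,i}) = d_{q,i} \) for all \( q, i \).

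Let \( B \le A \) be the submodule generated by the \( c_{q,i} \). For a fixed tuple, the admissible \( \alpha \) are the module homomorphisms \( A \to A_0^\psi \) that take prescribed values on \( B \) and are bijective. Deciding whether a module homomorphism with these prescribed values exists is a linear problem, solvable by standard algorithms for finite modules. If one exists, say \( \alpha_1 \), then all such homomorphisms form the affine set \( \alpha_1 + \operatorname{Hom}_{\mathbb{Z}Q}(A/B, A_0^\psi) \). It remains to decide whether this affine set contains a bijection.

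This last step is where I expect the main obstacle. I would first test whether \( A \cong A_0^\psi \) as modules and, if so, compute one isomorphism \( \beta \). Composing with \( \beta^{-1} \), the question becomes whether a coset \( e_1 + I \) contains a unit of the finite ring \( E = \End_{\mathbb{Z}Q}(A) \), where
\[
I = \{\, e \in E : e(B) = 0 \,\}
\]
is a left ideal of \( E \). The units in such a coset form a coset of the group of units congruent to \( 1 \) modulo \( I \), intersected appropriately. I would try to settle this with the advertised polynomial-time algorithm for the unit group of a finite ring. The idea is to compute generators of \( E^\times \) and of its subgroup fixing \( B \) pointwise, working modulo the Jacobson radical \( J(E) \). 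A coset contains a unit iff its image in the semisimple quotient \( E/J(E) \), a product of matrix rings over finite fields, contains a unit. That question is a rank condition solvable by linear algebra over each simple factor.

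Collecting the answers over all \( n^k \) tuples gives the decision procedure. The isomorphisms found, composed with generators of the stabilizing unit subgroup and with the inner automorphisms coming from changing the tuple, give generators of the coset of isomorphisms \( \Phi \). All steps are polynomial in \( n^k \).
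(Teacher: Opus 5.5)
Your proposal is correct, and its skeleton is the paper's. You enumerate the at most $|A_0|^k$ lifts of $\psi$ on a fixed generating tuple. You encode $G$ relative to $A$ by lifted relators of $G/A$; your Schreier elements $c_{q,i}$ play the role of the paper's $u_j$. You fix a module isomorphism via Proposition~\ref{modiso}, reduce to finding a unit of the centralizer ring $E=\End_{\mathbb{Z}Q}(A)$ with prescribed values on these elements, and extend to $G$ as in Theorem~\ref{kgenext}.

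\textbf{Where you differ: the decision step.} The paper computes generators of $E^\times$ via Theorem~\ref{units}, which needs generators of $\mathrm{SL}_n(q)$ and a primitive-root search. It then tests with permutation-group algorithms whether two $m$-tuples lie in one $E^\times$-orbit. You instead solve a linear system for one $e_1\in E$ with the prescribed values. All solutions then form $e_1+I$ with $I$ a left ideal, and you decide whether this coset meets $E^\times$ inside $E/J(E)$. This works for three reasons:
\begin{itemize}
\item units lift modulo the radical;
\item the image of $I$ in a simple factor $M_n(\mathbb{F}_q)$ is $\{X : X(W)=0\}$, where $W$ is its common kernel;
\item a unit exists in that factor iff $\bar e_1$ is injective on $W$.
\end{itemize}
This is sound and uses only $J(E)$, Lemma~\ref{wedder} and linear algebra. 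You should add how an actual unit in $e_1+I$ is produced, since $\Phi$ must be output: extend $\bar e_1|_W$ to an invertible matrix in each factor, then pull the difference back into $I$.

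\textbf{What this buys you.} It does not free you from Theorem~\ref{units}. The coset of isomorphisms still needs generators of the pointwise stabilizer of $B$ in $E^\times$, and there you do exactly what the paper does. The paper's single orbit/stabilizer computation handles both parts uniformly, whereas your route uses linear algebra for the decision and the unit-group algorithm for the coset.

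\textbf{A wording error in the coset step.} The automorphisms linking different tuples are $\Phi_{t_0}^{-1}\Phi_t\in\Aut_0(G,A)$. These are in general not inner: conjugation by elements of $A$ only realizes tuples differing by coboundaries, so it misses $H^1(Q,A)$. Use the isomorphisms $\Phi_t$ found for every admissible tuple, as the paper does with $\phi_{\overline{g},\overline{g^0}}$.
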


If we have two isomorphisms \( \Phi_1, \Phi_2 : G \to G_0 \) as in Theorem~\ref{main}, then
\( \Phi_1 \Phi_2^{-1} : G \to G \) is an automorphism of \( G \) which stabilizes \( A \) and acts trivially on \( G/A \).
In particular, the set of all isomorphisms \( \Phi : G \to G_0 \) inducing \( \psi \) between \( G/A \) and \( G_0/A_0 \)
can be represented by a coset \( \mathrm{Aut}_0(G, A) \cdot \Phi_1 \),
where \( \mathrm{Aut}_0(G, A) \) is a subgroup of \( \Aut(G) \) consisting of all elements which stabilize \( A \) and act trivially on \( G/A \).

We note that if \( k \) generators of \( G/A \) are given explicitly as a part of the input in Theorem~\ref{main},
then it follows from the proof that we can find an isomorphism (and the coset of isomorphisms) in time polynomial in \( n \) and \( |A|^k \).

In Theorem~\ref{main} the subgroups \( A, A_0 \) and isomorphism \( \psi : G/A \to G_0/A_0 \) are given as a part of the input.
In order to turn Theorem~\ref{main} into a proper isomorphism test, we need to be able to find suitable \( A, A_0 \) and \( \psi \) in polynomial time.
The following result covers an important special case when this is possible. The key property is that if \( G/A \) is \( k \)-generated,
then all isomorphisms from \( G/A \) to \( G_0/A_0 \) can be listed in time polynomial in \( n^k \).

\begin{theorem}\label{ctower}
	Let \( G \) and \( G_0 \) be finite groups of order \( n \) given by their Cayley tables.
	Suppose that \( G \) has a normal abelian subgroup \( A \) such that \( G/A \) has a subnormal series
	of length \( k \) with factors which are cyclic or simple groups.
	Then we can test in time polynomial in \( n^k \) whether \( G \) and \( G_0 \) are isomorphic, and find the coset of isomorphisms, if they are.
\end{theorem}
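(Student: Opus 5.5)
The plan is to reduce Theorem~\ref{ctower} to Theorem~\ref{main} by enumerating polynomially many candidate triples \( (A, A_0, \psi) \). The key observation is that the top group is few-generated. Suppose \( G/A \) has a subnormal series of length \( k \) whose factors are cyclic or simple. Each cyclic factor is \( 1 \)-generated, and each finite simple group is \( 2 \)-generated. Lifting generators of the factors along the series shows that \( G/A \) is generated by at most \( 2k \) elements. Hence every homomorphism out of \( G/A \) is determined by the images of \( 2k \) fixed generators. So all isomorphisms \( G/A \to G_0/A_0 \) can be listed in time \( n^{O(k)} \): guess the images and check that the map is a well-defined bijective homomorphism, via the Cayley tables of the quotients.

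The steps, in order, are as follows.
\begin{enumerate}
\item Find a suitable \( A \) in \( G \). Guess a tuple \( x_1, \dots, x_{2k} \in G \) together with the subnormal series \( G = G_0 \unrhd G_1 \unrhd \dots \unrhd G_k = A \) built step by step. Each \( G_{i+1} \) is the kernel of a surjection from \( G_i \) onto a cyclic or simple group, and this surjection is pinned down by images of few guessed elements. Verify that the resulting \( A \) is abelian and normal.
\item For the group \( G_0 \), search for the candidate images \( \Phi(A) \). Any isomorphism \( \Phi \) carries the series of \( G \) to a series of \( G_0 \) of the same shape. So I would build the candidate series in \( G_0 \) the same way, guessing the images \( y_i = \Phi(x_i) \). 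This gives \( A_0 \) as the last term, together with the induced bijection \( \psi \colon G/A \to G_0/A_0 \), \( x_iA \mapsto y_iA_0 \).
\item Discard candidates for which \( A_0 \) is not abelian normal or \( \psi \) is not an isomorphism.
\item For each surviving triple, call Theorem~\ref{main}. This runs in time polynomial in \( n^{2k} \).
\item The union of the returned cosets is the full set of isomorphisms \( G \to G_0 \), since every isomorphism arises from its own induced triple. It is a coset of \( \Aut(G) \) once any single isomorphism is known. Generators of \( \Aut(G) \) are obtained by running the same procedure with \( G_0 = G \), collecting \( \Aut_0(G,A) \) together with one representative for each liftable automorphism of the top group.
\end{enumerate}

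The main obstacle is the second step: making the choice of \( A_0 \) in \( G_0 \) depend on only \( n^{O(k)} \) bits of guessed data. A group may have superpolynomially many normal subgroups with a given cyclic or simple quotient, for example in the elementary abelian layers, so one cannot naively list kernels. What I would try first is to canonise the choice of \( A \). Replacing \( A \) by a larger abelian normal subgroup keeps the hypothesis, because quotients of cyclic or simple factors remain cyclic, simple or trivial. So I would look for a construction of \( A \) from the tuple \( (x_i) \) that is invariant under isomorphism. An example would be the centraliser-type subgroup determined by the \( x_i \) and the normal closures of the successive layers. With such a construction, \( \Phi(A) \) is forced to be the same construction applied to \( (y_i) \) in \( G_0 \), and this is exactly what makes the enumeration polynomial in \( n^k \).
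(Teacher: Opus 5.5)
\textbf{There is a genuine gap: you never find \(A\) or list the candidates \(A_0\).} Your overall reduction to Theorem~\ref{main} matches the paper: enumerate triples \((A, A_0, \psi)\), list \(\psi\) via images of \(2k\) generators, and take the union of the returned cosets. But the step you yourself call the main obstacle is never carried out, so the proposal does not yet give an algorithm. Step~1 relies on each surjection \(G_i \to G_i/G_{i+1}\) being ``pinned down by images of few guessed elements''. That is false as stated: a surjection \(\mathbb{Z}_p^d \to \mathbb{Z}_p\) is a linear functional and needs the images of \(d\) basis vectors. Step~2 depends on an isomorphism-invariant construction of \(A\) from the tuple \((x_i)\). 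This construction is never defined, let alone shown to give an abelian normal subgroup with a short cyclic-or-simple series above it. Without these you cannot find \(A\), which is not part of the input. Nor can you produce a list of candidates guaranteed to contain \(\Phi(A)\).

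\textbf{The missing idea is that the premise behind your obstacle is false, so no canonisation is needed.} A group \(H\) of order at most \(n\) has at most \(n\) normal subgroups with cyclic quotient. Such an \(N\) contains \([H,H]\), and \(N/[H,H]\) is the kernel of a homomorphism \(H/[H,H] \to \mathbb{Z}_e\), where \(e\) is the exponent of \(H/[H,H]\). There are exactly \(|H/[H,H]|\) such homomorphisms, and they can be listed by choosing images of cyclic generators. In your example, \(\mathbb{Z}_p^d\) has only \((p^d-1)/(p-1)\) subgroups of index \(p\); it is the total number of subgroups that is superpolynomial. Distinct normal subgroups with nonabelian simple quotients give a direct-product quotient, so there are at most \(\log_{60}|H|\) of them, and they can be found in polynomial time.

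\textbf{How the paper uses this.} Recursing \(k\) times enumerates all \(n^{O(k)}\) subnormal series of length \(k\) with cyclic or simple factors, in \(G\) and in \(G_0\) alike (Proposition~\ref{findcf} and Corollary~\ref{findall}). This finds \(A\). The resulting list of abelian normal \(A_0 \unlhd G_0\) necessarily contains \(\Phi(A)\) for every isomorphism \(\Phi\), after which your Steps~3--5 go through.

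\textbf{A smaller point on \(\Aut(G)\).} When computing \(\Aut(G)\) with \(G_0 = G\), you must keep a representative of every returned coset, including those with \(A_0 \neq A\). Together with \(\Aut_0(G,A)\), liftable automorphisms of \(G/A\) alone only generate the stabilizer of \(A\) in \(\Aut(G)\), and \(A\) need not be characteristic. For example, take \(G = \mathbb{Z}_2 \times \mathbb{Z}_2\) with \(|A| = 2\).
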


The normal subgroup \( A \) is not given as a part of the input in Theorem~\ref{ctower}, but we can find it in polynomial time, see Corollary~\ref{findall}.
Note also that the coset of isomorphisms in Theorem~\ref{ctower} is a \( \Aut(G) \)-coset, in particular, in the assumptions of Theorem~\ref{ctower}
we can compute \( \Aut(G) \) in polynomial time.

To illustrate the applications of Theorem~\ref{ctower}, we make two simple corollaries.

\begin{corollary}
	Let \( G \) and \( G_0 \) be finite groups of order \( n \) given by their Cayley tables.
	Suppose that \( G \) is an extension of an abelian group by a \( k \)-generated abelian group.
	Then we can test in time polynomial in \( n^k \) whether \( G \) and \( G_0 \) are isomorphic, and find the coset of isomorphisms, if they are.
\end{corollary}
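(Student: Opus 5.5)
The plan is to deduce the corollary directly from Theorem~\ref{ctower}. To do that, I need a subnormal series of $G/A$ of length at most $k$ whose factors are cyclic, where $A \unlhd G$ is abelian and $G/A$ is a $k$-generated abelian group.

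First I construct the series. Write $B = G/A$ and choose generators $b_1, \dots, b_k$ of $B$. Set $B_i = \langle b_1, \dots, b_i \rangle$ for $0 \le i \le k$. Since $B$ is abelian, every $B_i$ is normal in $B$, so
\[ 1 = B_0 \le B_1 \le \dots \le B_k = B \]
is a normal, and in particular subnormal, series. Each factor $B_i/B_{i-1}$ is generated by the image of $b_i$, so it is cyclic. This gives a series of length $k$ with cyclic factors. If some factors are trivial, we may drop them, or regard the trivial group as cyclic; either way the length is at most $k$.

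Next I check that the hypotheses of Theorem~\ref{ctower} hold. The theorem only requires that such an $A$ exists for $G$; it does not require $A$ or the series to be given as input. As remarked after the theorem, the algorithm locates suitable subgroups itself, see Corollary~\ref{findall}. So Theorem~\ref{ctower} applies to $G$ and $G_0$. It tests isomorphism and returns the coset of isomorphisms in time polynomial in $n^k$.

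I do not expect a real obstacle. The only point needing care is matching the notion of ``length'' in Theorem~\ref{ctower}. I take it to mean the number of factors, which is $k$ for the series above. Possible trivial factors cause no issue, since they can be removed without increasing the length.
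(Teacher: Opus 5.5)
Your proposal is correct and is essentially the paper's own argument: the corollary is presented as a direct application of Theorem~\ref{ctower}, and the only input needed is exactly your observation that a $k$-generated abelian group has a normal series $1 = B_0 \le B_1 \le \dots \le B_k$ of length $k$ with cyclic factors. Trivial factors need no special treatment: the trivial group is cyclic, and the enumeration in Proposition~\ref{findcf} includes $A = G$, so such factors are handled automatically.
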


In particular, when \( k = 1 \) the above corollary resolves the isomorphism problem for arbitrary abelian-by-cyclic group extensions.
This has been known for coprime extensions by~\cite{legall}, but as far as the author is aware, the case of arbitrary extensions was open.

\begin{corollary}
	Let \( G \) and \( G_0 \) be finite groups of order \( n \) given by their Cayley tables.
	Suppose that \( G \) is an extension of an abelian group by a direct product of \( k \) simple groups.
	Then we can test in time polynomial in \( n^k \) whether \( G \) and \( G_0 \) are isomorphic, and find the coset of isomorphisms, if they are.
\end{corollary}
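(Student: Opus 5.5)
This corollary is a special case of Theorem~\ref{ctower}, so the plan is to verify its hypotheses and invoke it.

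Write \( G/A \simeq S_1 \times \dots \times S_k \), where \( A \unlhd G \) is abelian and each \( S_i \) is simple. I will use the subnormal series
\[
1 \unlhd S_1 \unlhd S_1 \times S_2 \unlhd \dots \unlhd S_1 \times \dots \times S_k = G/A .
\]
It has length exactly \( k \). Each term is normal in the next, and in fact normal in the whole product. The \( i \)-th factor is isomorphic to \( S_i \), which is simple; if \( S_i \) is abelian it is cyclic of prime order, which is also allowed. Hence \( G \) satisfies the hypothesis of Theorem~\ref{ctower} with the same \( A \) and the same \( k \).

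Theorem~\ref{ctower} then tests in time polynomial in \( n^k \) whether \( G \simeq G_0 \), and returns the coset of isomorphisms when they exist.

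I see no real obstacle in this step. The theorem does not need \( A \) or the decomposition as input, since \( A \) can be found by Corollary~\ref{findall}. All the difficulty sits inside Theorem~\ref{ctower}: listing the isomorphisms \( G/A \to G_0/A_0 \) and applying Theorem~\ref{main} to each. One small point to note is that a product of \( k \) simple groups is \( 2k \)-generated, since every finite simple group is \( 2 \)-generated. So the series argument, rather than a bound on the number of generators, is what keeps the exponent at \( k \), and Theorem~\ref{ctower} is stated in terms of the series for exactly this reason.
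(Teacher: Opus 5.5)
Your proposal is correct and matches the paper's intended argument: the corollary is a direct application of Theorem~\ref{ctower} via the series $1 \unlhd S_1 \unlhd S_1 \times S_2 \unlhd \dots \unlhd S_1 \times \dots \times S_k$, whose $k$ factors are simple. Your closing remark is slightly off, though: the proof of Theorem~\ref{ctower} itself uses that $G/A$ is $2k$-generated and runs in time polynomial in $n^{2k}$, which is still polynomial in $n^k$, so the series hypothesis is there to let $A$ and $A_0$ be found via Corollary~\ref{findall}, not to keep the exponent at $k$.
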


This corollary should be compared to~\cite[Theorem~7.1]{grochow} which applies only to the case when the bottom group is central and elementary abelian
(though we note that~\cite[Theorem~7.1]{grochow} does not make any assumptions on the number \( k \) of direct factors of the top group).

Note that one can obtain applications of Theorem~\ref{main} beyond Theorem~\ref{ctower}. For example, since the solvable radical of a finite group
can be computed in polynomial time, see~\cite[Section~6.3.1]{seress}, one can obtain the following corollary of Theorem~\ref{main}.

\begin{corollary}
	Let \( G \) and \( G_0 \) be finite groups of order \( n \) given by their Cayley tables.
	Suppose that the solvable radical \( A \) of \( G \) is abelian, and \( G/A \) is \( k \)-generated.
	Then we can test in time polynomial in \( n^k \) whether \( G \) and \( G_0 \) are isomorphic, and find the coset of isomorphisms, if they are.
\end{corollary}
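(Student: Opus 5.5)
This corollary should follow directly from Theorem~\ref{main}, by enumerating candidate isomorphisms between the top groups. First compute the solvable radical \( A \) of \( G \) and the solvable radical \( A_0 \) of \( G_0 \) in polynomial time, using~\cite[Section~6.3.1]{seress}. Any isomorphism \( \Phi : G \to G_0 \) maps the solvable radical onto the solvable radical, since it is characteristic and defined intrinsically. So if \( G \simeq G_0 \), then \( \Phi(A) = A_0 \) and \( \Phi \) induces an isomorphism \( G/A \to G_0/A_0 \). We therefore first check that \( A_0 \) is abelian and that \( |A_0| = |A| \); if either check fails, we answer ``not isomorphic''.

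Next we enumerate all isomorphisms \( \psi : G/A \to G_0/A_0 \).

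\begin{itemize}
\item Find a generating set \( x_1, \dots, x_k \) of \( G/A \) with at most \( k \) elements by brute force over all \( k \)-tuples. There are at most \( n^k \) tuples, and testing whether a tuple generates takes polynomial time.
\item For each \( k \)-tuple \( (y_1, \dots, y_k) \) of elements of \( G_0/A_0 \), at most \( n^k \) of them, decide whether \( x_i \mapsto y_i \) extends to an isomorphism. Do this by building the map on words via a breadth-first closure of the Cayley graph and checking well-definedness, the homomorphism property and bijectivity in time polynomial in \( n \).
\end{itemize}

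This lists all of \( \mathrm{Iso}(G/A, G_0/A_0) \) in time polynomial in \( n^k \). The list has at most \( n^k \) members, since an isomorphism is determined by the images of the generators.

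For each listed \( \psi \), apply Theorem~\ref{main} to \( (G, A, G_0, A_0, \psi) \). This is legitimate because \( A \) and \( A_0 \) are abelian normal subgroups and \( G/A \) is \( k \)-generated, and each call runs in time polynomial in \( n^k \). If every call fails, then \( G \not\simeq G_0 \) by the argument above. If some call succeeds, it returns some \( \Phi_1 \) together with generators of \( \mathrm{Aut}_0(G, A) \), which is the same for every \( \psi \).

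To obtain the full coset \( \Aut(G)\Phi_1 \), also run the procedure with \( G_0 = G \), \( A_0 = A \) over all \( \psi \in \Aut(G/A) \). Every automorphism of \( G \) fixes \( A \), so \( \Aut(G) \) is generated by \( \mathrm{Aut}_0(G,A) \) together with one lift of each extendable \( \psi \). This yields generators of \( \Aut(G) \) of polynomial size in \( n^k \).

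The only point needing care is the justification that every isomorphism respects \( A \), and that is immediate from \( A \) being the solvable radical. The real work is hidden in Theorem~\ref{main}, so no step here is a genuine obstacle; the total running time is \( n^{O(k)} \).
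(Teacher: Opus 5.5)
Your proposal is correct and follows essentially the paper's route: compute the (characteristic) solvable radicals \( A \) and \( A_0 \), list the at most \( n^k \) isomorphisms \( G/A \to G_0/A_0 \) by brute force, apply Theorem~\ref{main} to each, and assemble \( \Aut(G) \) from the resulting cosets exactly as in the proof of Theorem~\ref{ctower}. The only difference from Theorem~\ref{ctower} is that, since the radical is characteristic, a single choice of \( A_0 \) suffices instead of enumerating candidates, and your argument uses this correctly.
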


Since \( k \)-generated groups with trivial solvable radical can have arbitrarily long subnormal series with simple factors, the above corollary does not follow
from Theorem~\ref{ctower}.

We note that it is an interesting question whether one can get rid of any assumptions on the number of generators \( k \) in Theorem~\ref{main},
though such a result seems to require cohomological techniques in the spirit of~\cite{grochow}.
If that was possible, that would allow one, for example, to build a polynomial isomorphism test for groups \( G \) of order \( n \) with abelian solvable radical \( A \),
such that \( G/A \) has at most \( O(\log n / \log \log n) \) minimal normal subgroups, cf.~\cite[Corollary~6.13]{grochow}.

The main novelty which allowed us to prove Theorem~\ref{main} is the following result on the computation of unit groups of finite rings,
which might be of independent interest. The way rings are encoded in our algorithms is explained in detail in Section~\ref{sprem}.

\begin{theorem}\label{units}
	Let \( R \) be a finite unitary ring given by its additive generators.
	If \( p_{\max} \) is the largest prime divisor of \( |R| \),
	then we can compute the multiplicative generators of \( R^\times \) in time polynomial in \( \log |R| \) and \( p_{\max} \).
\end{theorem}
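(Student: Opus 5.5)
We compute generators of \( R^\times \) by reducing to the semisimple case modulo the Jacobson radical. First, \( R \) decomposes as a direct product of its \( p \)-primary components \( R_p = e_p R \), where the \( e_p \) are central idempotents obtained from the additive structure. We have \( R^\times = \prod_p R_p^\times \), so we may assume \( |R| = p^m \). The cost of factoring \( |R| \) is polynomial in \( \log|R| \) and \( p_{\max} \), by trial division up to \( p_{\max} \).

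Now let \( J \) be the Jacobson radical of \( R \), and assume it can be computed in time polynomial in \( \log |R| \) and \( p \). In characteristic \( p^e \) one can handle this by working in \( \bar R = R/pR \), a finite-dimensional \( \mathbb{F}_p \)-algebra. There the radical is computable by the known deterministic algorithms, for instance Rónyai's, or Cohen–Ivanyos–Wales, which are polynomial in the dimension and \( p \). Then \( J \) is the preimage of \( J(\bar R) \), because \( pR \subseteq J(R) \).

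The exact sequence \( 1 \to 1+J \to R^\times \to (R/J)^\times \to 1 \) splits the task into two parts.

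For the kernel, \( 1+J \) is generated by \( 1+x \) with \( x \) running over additive generators of the layers \( J^i/J^{i+1} \). The filtration \( 1+J^i \) has quotients \( (1+J^i)/(1+J^{i+1}) \cong J^i/J^{i+1} \) (additive), via \( 1+x \mapsto x \). Lifting additive generators of each layer therefore gives multiplicative generators of \( 1+J \); there are \( O(\log|R|) \) of them.

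For the quotient, \( R/J \) is semisimple, hence by Wedderburn a product of matrix algebras \( M_{n_i}(\mathbb{F}_{q_i}) \). We compute this decomposition explicitly, using central primitive idempotents and then an explicit isomorphism with a full matrix algebra. The latter is obtained by finding a zero divisor, or equivalently a primitive idempotent, which is where Rónyai's methods run in time polynomial in \( p \). With the decomposition in hand, \( (R/J)^\times = \prod \GL_{n_i}(\mathbb{F}_{q_i}) \). Each factor is generated by elementary transvections together with \( \mathrm{diag}(\omega,1,\dots,1) \) for a generator \( \omega \) of \( \mathbb{F}_{q_i}^\times \). We need not find a primitive root: \( \mathbb{F}_{q}^\times \) is generated by the whole additive basis's nonzero multiples, or more simply by a set of \( O(\log q) \) random elements. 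Deterministically, we can take all elements of a small generating set, e.g. \( \mathbb{F}_p^\times \)-multiples and \( 1+\alpha \) for basis elements \( \alpha \), and argue this generates the group. This needs care. Finally we lift each generator to \( R \): any preimage of a unit modulo \( J \) is a unit.

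The union of the lifted generators and the generators of \( 1+J \) generates \( R^\times \). The main obstacle is the semisimple step: explicitly identifying \( R/J \) with a product of matrix algebras over finite fields deterministically, and producing generators of \( \mathbb{F}_q^\times \) without factoring \( q-1 \) or finding primitive elements. I would first rely on Rónyai's deterministic algorithms, which are polynomial in \( p \); this is where the \( p_{\max} \) dependence enters. For \( \mathbb{F}_q^\times \) I would generate it by all elements of the form \( a+\alpha \), with \( a \in \mathbb{F}_p \) and \( \alpha \) a fixed field generator. That set has size \( O(p) \), and results on generation by shifted elements (Chung/Katz type bounds) should suffice for large \( p \) relative to the degree; small cases can be handled by brute force.
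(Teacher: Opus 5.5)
Your reduction matches the paper's proof at every step but one. The matching steps are: primary components; \( J \) as the preimage of the radical of \( R/pR \); the filtration of \( 1+J \) (your layers \( J^i/J^{i+1} \) work as well as Hofmann's \( J^{2^j} \)); R\'onyai's explicit Wedderburn decomposition; \( \GL_n(q) \) generated by transvections with entries from an \( \mathbb{F}_p \)-basis of \( \mathbb{F}_q \) plus diagonal matrices; and lifting modulo \( J \).

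The exception is the step you flagged, and it is a genuine gap: you need multiplicative generators of \( \mathbb{F}_q^\times \), \( q = p^n \), found deterministically in time polynomial in \( n \) and \( p \). None of your candidates does this.
\begin{itemize}
\item Nonzero multiples of an additive basis need not generate. In \( \mathbb{F}_{16} = \mathbb{F}_2(\alpha) \) with \( \alpha^4+\alpha^3+\alpha^2+\alpha+1=0 \), the basis \( 1,\alpha,\alpha^2,\alpha^3 \) lies in the subgroup of order \( 5 \).
\item Random elements give only a Monte Carlo procedure. You cannot even certify generation without factoring \( q-1 \).
\item The set \( \{\alpha+a : a \in \mathbb{F}_p\} \) works when \( p > (n-1)^2 \), by Katz's bound \( |\sum_{a\in\mathbb{F}_p}\chi(\alpha+a)| \leq (n-1)\sqrt{p} \) for nontrivial characters \( \chi \). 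But the fallback ``small cases by brute force'' fails in the most important regime. For \( p=2 \) and \( n \) of order \( \log|R| \) (e.g.\ \( R=\mathbb{F}_{2^n} \)), enumerating \( \mathbb{F}_q \) is exponential in \( \log|R| \), and \( \{\alpha,\alpha+1\} \) comes with no guarantee.
\end{itemize}

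The paper closes this step with an external deterministic result. It first represents \( \mathbb{F}_q \) explicitly as \( \mathbb{Z}_p[x]/(f) \) via Lenstra. Then Shoup's Theorem~1.1 outputs, in time polynomial in \( \deg f \) and \( p \), a polynomial-size subset of \( \mathbb{Z}_p[x]/(f) \) containing a primitive root.

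Alternatively, you can rescue your Katz argument with one extra idea: enlarge the base field, then push down by the norm.
\begin{itemize}
\item Choose \( k \) minimal with \( p^k > (n-1)^2 \), so \( p^k \leq p(n-1)^2 \).
\item Let \( L=\mathbb{F}_{p^{nk}} \), with \( K \) its subfield of order \( p^k \), and take \( \beta \) generating \( L \) over \( \mathbb{F}_p \). Then \( [K(\beta):K]=n \).
\item By Katz applied over \( K \), \( \{\beta+a : a\in K\} \) generates \( L^\times \).
\item The norm \( L^\times \to \mathbb{F}_q^\times \) is surjective, so it maps these elements to generators of \( \mathbb{F}_q^\times \).
\end{itemize}
This route requires constructing \( L \) deterministically and explicitly identifying its subfield of order \( q \) with your \( \mathbb{F}_q \). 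Both are doable in time polynomial in \( nk \) and \( p \), but must be supplied. Either way, this step needs a real additional input; brute force does not cover it.
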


Our proof of Theorem~\ref{units} follows the proof of a result by Hofmann~\cite{hofmann}. It was shown in~\cite[Theorem~A]{hofmann}
that the problem of computing an effective presentation of the unit group of a finite ring is probabilistic polynomial-time equivalent to the factoring
and discrete logarithm problems, for which no polynomial-time algorithm is known.
It turns out that this difficulty can be overcome if the time complexity of the algorithm is allowed to depend on the
size of the largest prime divisor of the order of the ring. In our application to the group isomorphism problem
\( R \) will be a subring of the endomorphism ring of a finite abelian group \( A \) given by its Cayley table, see Corollary~\ref{cunits}.
In particular, all prime divisors of \( |R| \) will be bounded by \( |A| \).

The structure of the paper is as follows. In Section~\ref{sprem} we give computational preliminaries, putting
special emphasis on the different ways objects like groups, rings and modules are represented in our algorithms.
In Section~\ref{siso} we give an explicit criterion on when two group extensions are isomorphic, which is later used in the algorithms.
Section~\ref{sunits} is devoted to the proof of Theorem~\ref{units}, while Sections~\ref{sproof} and~\ref{stower} prove Theorems~\ref{main} and~\ref{ctower}, respectively.

\section{Preliminaries}\label{sprem}

Recall that an algorithm works in \emph{polynomial time}, if it works in time which is bounded by a polynomial in the size of the input.
Our algorithms will mostly work with finite groups, rings and modules and it is, therefore, important to specify how exactly these objects
are represented in the algorithms.

Let \( G \) be a finite group of order \( n \). We say that \( G \) is given by its \emph{Cayley table} to an algorithm,
if \( G \) is encoded by its \( n \times n \) multiplication table. We may assume that all elements of \( G \)
are numbered from \( 1 \) to \( n \), so the Cayley table has size \( O(n^2 \log n) \) if we encode each table cell in binary.
Therefore in this case an algorithm works in polynomial time if it works in time polynomial in \( n \).

Subgroups of \( G \) can be specified as a list of their elements, homomorphisms (and isomorphisms) between two groups given by their Cayley tables
are specified by a list of element-image pairs. Many standard operations for groups can be easily performed in polynomial time for groups given by their Cayley tables,
for example: given a subset of \( G \) one can find the subgroup it generates, given a normal subgroup one can compute the Cayley table of
the quotient group and the associated natural homomorphism, if \( G \) is \( k \)-generated, one can list all \( k \)-tuples which generate \( G \)
in time polynomial in \( n^k \).

Let \( G \leq \Sym(m) \) be a permutation group of degree \( m \). We say that \( G \) is given by \emph{permutation generators},
if we are given a subset \( S \) of \( G \) which generates \( G \). Note that any permutation from \( \Sym(m) \)
can be specified by a list of \( m \) elements, so it occupies \( O(m \log m) \) space. 
Hence, if \( G \) is given to an algorithm by permutation generators, then the algorithm works in polynomial time
if it works in time polynomial in \( |S|\cdot m \cdot \log m \). It is well-known, see for instance~\cite[Exercise~4.1]{seress},
that any generating set \( S \) can be reduced to at most \( m^2 \) generators in time polynomial in \( m \) and \( |S| \). Therefore
we may safely assume that an algorithm on permutation groups works in polynomial time if it works in time polynomial in the degree of the permutation group.
Subgroups of permutation groups are also specified by their generators.
We refer the reader to~\cite[Section~3.1]{seress} for the library of polynomial-time algorithms for permutation groups, note, in particular,
that tasks like finding orbits and pointwise stabilizers can be performed in polynomial time.

Let \( A \) be a finite abelian group. It is well-known that \( A \) is isomorphic to a
direct sum of \( t \) cyclic groups of orders \( n_1, \dots, n_t \).
We say that \( A \) is given by its \emph{cyclic generators}, if the algorithm is given a \( t \)-tuple \( (n_1, \dots, n_t) \) of integers as input.
The length of the input is \( l = O(\sum_{i=1}^t \log n_i) \), and elements of \( A \) can be represented by \( t \)-tuples \( (x_1, \dots, x_t) \),
where \( x_i \in \mathbb{Z}_{n_i} \), \( i = 1, \dots, t \). If \( e_i = (0, \dots, 1, \dots, 0) \) is a generator of the \( i \)-th direct summand
\( \mathbb{Z}_{n_i} \), \( i = 1, \dots, t \), then clearly \( e_1, \dots, e_t \) generate \( A \). With some abuse of terminology, we
say that \( e_1, \dots, e_t \) are the cyclic generators of \( A \). Observe also that \( l = O(\log |A|) \).

Subgroups of \( A \) will be specified by their generating sets;
we note that any set of generators \( S \subseteq A \) of a subgroup can be reduced to a set of size at most \( t \) in time polynomial in \( |S| \) and~\( l \).
Homomorphisms from \( A \) into some other group are specified by listing images of \( e_1, \dots, e_t \) under the homomorphism in question.
We refer the reader to~\cite[Chapter~2]{iuliana-thesis} for a list of polynomial-time algorithms for abelian groups given by cyclic generators.
Observe also, that if the group \( A \) is given by its Cayley table, then in time polynomial in \( |A| \) one can compute a representation of \( A \) by cyclic generators
together with an explicit isomorphism from \( A \) to \( \mathbb{Z}_{n_1} \oplus \dots \oplus \mathbb{Z}_{n_t} \), see, for instance,~\cite{kayal}.

Let \( R \) be a finite (not necessarily commutative) unitary ring, 
and let \( R^+ \) denote its additive group. We say that \( R \) is given by \emph{additive generators},
if the abelian group \( R^+ \simeq \mathbb{Z}_{n_1} \oplus \dots \oplus \mathbb{Z}_{n_t} \) is given by its cyclic generators
and the ring multiplication \( \cdot : R \times R \to R \) is specified by \( t^3 \) numbers \( \alpha_{ijk} \) such that
\( e_i \cdot e_j = \sum_{k=1}^t \alpha_{ijk} e_k \), where \( e_1, \dots, e_t \) are the cyclic generators of \( R^+ \).
The length of the input in this representation can be bounded by \( l = O(t^2 \cdot \sum_{i=1}^t \log n_i) \).
Note that \( \log |R| \) is bounded polynomially in terms of \( l \) and vice versa, \( l = O((\log |R|)^3) \).
In particular, an algorithm working with a finite ring \( R \) works in polynomial time if and only if the algorithm works in time polynomial in \( \log |R| \).

With some abuse of terminology, we will say that a subring or an ideal \( I \) in \( R \) is given by additive generators,
if we are given a set of elements of \( R \) which spans \( I \) additively.
Homomorphisms between rings are represented as homomorphisms between underlying abelian groups.
We refer the reader to~\cite[Chapter~3]{iuliana-thesis} for some basic polynomial-time algorithms for finite rings.
In particular, we note that tasks like computing a quotient of a ring by its ideal, and computing sums and products of ideals
can be performed in polynomial time.

Let \( A \simeq \mathbb{Z}_{n_1} \oplus \dots \oplus \mathbb{Z}_{n_t} \)
be an abelian group given by its cyclic generators \( a_1, \dots, a_t \), and let \( R \) be a finite unitary ring
given by its additive generators \( e_1, \dots, e_m \). If \( A \) has a structure of a right \( R \)-module, then in order
to give the module action of \( R \) on \( A \) to an algorithm, we specify \( t^2 \cdot m \) numbers \( \beta_{ijk} \)
such that \( a_i^{e_j} = \sum_{k=1}^t \beta_{ijk} a_k \); here \( a^x \) denotes the image of \( a \in A \) under the action of \( x \in R \).
The length of the input in this case is bounded by \( O(tm \cdot \sum_{i=1}^t \log n_i) \).
Homomorphisms between modules are represented as homomorphisms between underlying groups.
The following result shows that isomorphism of finite modules can be determined in polynomial time.

\begin{proposition}[{\cite[Theorem~1.1]{iuliana} and \cite[Proposition~3.1.7]{iuliana-thesis}}]\label{modiso}
	Let \( R \) be a finite unitary ring given by additive generators,
	and let \( A \) and \( A_0 \) be two finite \( R \)-modules given by cyclic generators.
	There exists a polynomial-time algorithm which decides whether \( A \) and \( A_0 \) are isomorphic as \( R \)-modules,
	and if they are, one can compute an explicit \( R \)-isomorphism \( \mu : A \to A_0 \) in polynomial time.
\end{proposition}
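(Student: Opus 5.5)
The plan is to reduce module isomorphism to finding a generator of a cyclic module over the endomorphism ring, and to find that generator modulo the Jacobson radical.

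\emph{Step 1 (linear algebra over \( \mathbb{Z} \)).} Given generators \( a_1,\dots,a_t \) of \( A \), \( b_1,\dots,b_s \) of \( A_0 \), and the action data \( \beta_{ijk} \), an \( R \)-homomorphism \( \mu : A \to A_0 \) is determined by the images \( \mu(a_i) \). These images must satisfy two families of linear congruences:
\begin{itemize}
\item the order constraints \( n_i \mu(a_i) = 0 \);
\item the commutation constraints \( \mu(a_i^{e_j}) = \mu(a_i)^{e_j} \) for all \( i, j \).
\end{itemize}
This is a homogeneous linear system over a finite abelian group. Using Smith normal form and the polynomial-time abelian group algorithms referred to in Section~\ref{sprem}, we compute additive generators of \( H = \mathrm{Hom}_R(A, A_0) \), and in the same way of the ring \( E = \End_R(A) \). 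Composition is explicit, so \( E \) is given by additive generators in the sense of Section~\ref{sprem}, and \( H \) is a right \( E \)-module under precomposition. First check that \( |A| = |A_0| \).

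\emph{Step 2 (reformulation).} If \( \varphi : A \to A_0 \) is an \( R \)-isomorphism, then \( H = \varphi E \), and \( \psi \mapsto \varphi^{-1}\psi \) identifies \( H \) with \( E_E \). So \( H \) is a cyclic \( E \)-module with \( |H| = |E| \), and every generator of \( H \) is an isomorphism. Indeed, if \( hE = H \) then \( \varphi = h\varepsilon \) for some \( \varepsilon \in E \), so \( h \) is injective. Since \( |A| = |A_0| \), \( h \) is bijective.

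Hence the algorithm is:
\begin{itemize}
\item if \( |H| \neq |E| \), answer "not isomorphic";
\item otherwise, try to find a generator \( h \) of \( H \) as an \( E \)-module;
\item check whether \( h \) is bijective, and output it if so; if no generator is found, answer "not isomorphic".
\end{itemize}

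\emph{Step 3 (finding a generator; the main obstacle).} We may work one prime at a time: decompose \( E \) and \( H \) into their \( p \)-primary parts, which are computable from the cyclic decompositions. Compute the Jacobson radical \( J = J(E) \). By Nakayama's lemma, \( h \) generates \( H \) if and only if its image generates \( H/HJ \) over the semisimple ring \( \bar E = E/J \). Moreover \( H/HJ \cong \bar E_{\bar E} \) when \( A \simeq A_0 \).

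Each \( p \)-part of \( \bar E \) is a semisimple \( \mathbb{F}_p \)-algebra. We would compute its Wedderburn decomposition into full matrix algebras \( M_{r}(\mathbb{F}_q) \), together with explicit primitive idempotents. Then we decompose \( H/HJ \) into homogeneous components and, inside each component, produce a generator by choosing bases of the simple summands compatibly with the matrix units. The resulting element is a generator whenever one exists, and any lift of it to \( H \) generates \( H \).

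I expect this step to be the hardest part. It requires:
\begin{itemize}
\item computing \( J(E) \) for a ring that is not an algebra over a field, which I would handle by first computing the radical of \( E/pE \) over \( \mathbb{F}_p \) and pulling it back;
\item splitting the semisimple quotient deterministically, as in Rónyai-type algorithms; this needs factorization of polynomials over \( \mathbb{F}_p \), whose cost is exactly where a dependence on \( p \) may enter.
\end{itemize}
If only the final isomorphism is needed and not the module structure, a simpler fallback is to pick elements of \( H/HJ \) randomly. A random element generates a free cyclic semisimple module with probability bounded below polynomially. This gives a Las Vegas version, which we would then derandomize using the explicit Wedderburn splitting.
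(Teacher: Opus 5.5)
Your Steps 1--2 are sound, with one slip: from \( \varphi = h\varepsilon \) you get that \( h \) is surjective and \( \varepsilon \) is injective, not that \( h \) is injective. Since \( |A| = |A_0| \), the conclusion is unchanged.

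The genuine gap is Step~3. It is not polynomial time in the sense of Section~\ref{sprem}, i.e.\ polynomial in \( \log|R| \), \( \log|A| \), \( \log|A_0| \). Two things in it are the problem:
(a) It needs the prime divisors of the orders \( n_i \), both to split into \( p \)-primary parts and to compute \( J(E) \) via \( E/pE \). Already for \( E = \mathbb{Z}_n \), computing \( J \) amounts to computing the product of the distinct primes dividing \( n \), and no polynomial-time algorithm is known for that.
(b) It needs a Wedderburn splitting of \( E/J \) with explicit matrix units, which requires factoring polynomials over \( \mathbb{F}_p \).
The known deterministic methods for both (trial division, Berlekamp) are polynomial in \( p \), not in \( \log p \). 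So your plan actually gives a bound polynomial in the input length and \( p_{\max} \), the same type as Theorem~\ref{units}. That weaker version would suffice for the paper's only use of Proposition~\ref{modiso}: in Step~3 of Proposition~\ref{findis} all primes are at most \( n \). But it is not the statement. The paper takes the statement from~\cite[Theorem~1.1]{iuliana} and gives no proof of its own.

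The randomized fallback does not repair this, for three reasons.
\begin{itemize}
\item Sampling in \( H/HJ \) still presupposes knowing \( J \).
\item Your claimed polynomial lower bound on the success probability is false when \( p \) is small and there are many simple components. For \( R = A = \mathbb{F}_2^k \) you get \( E = \bar E = \mathbb{F}_2^k \). Its only generator as a right module over itself is \( 1 \), one element out of \( 2^k \), while the input length is polynomial in \( k \).
\item ``Derandomizing via the explicit Wedderburn splitting'' reintroduces exactly the factorization at issue. Even a correct randomized repair would only give a Las Vegas algorithm, not the deterministic one claimed.
\end{itemize}
The missing idea is a deterministic, factorization-free way to produce a generator of a cyclic module over a finite ring, without computing the Jacobson radical or splitting the semisimple quotient. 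Supplying that is the content of the cited result. Its algorithms are designed to avoid computing the radical and to treat small and large characteristic uniformly.
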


Let \( Ax = b \) be a system of linear Diophantine equations, where \( A \in M_{mn}(\mathbb{Z}) \), \( b \in \mathbb{Z}^m \)
and \( x \) is a vector of unknowns. Recall that the set of solutions to \( Ax = b \) is given by a particular solution \( y \in \mathbb{Z}^n \)
and a set of vectors \( v_1, \dots, v_t \in \mathbb{Z}^n \) which generate the null space of the homogeneous system \( Ax = 0 \).
We input a system \( Ax = b \) to an algorithm by specifying \( mn + m \) integers in binary, so the input length is bounded by
\( O(\sum_{i,j} \log a_{ij} + \sum_i \log b_i) \), where \( A = (a_{ij}) \) and \( b = (b_i) \).

\begin{proposition}[{\cite{dio1, dio2}}]\label{diosolve}
	A system of linear Diophantine equations can be solved in polynomial time.
\end{proposition}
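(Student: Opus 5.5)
The plan is to reduce the system to triangular form by an integral change of variables, using a column-style Hermite normal form. Concretely, I would compute a unimodular matrix \( U \in \GL_n(\mathbb{Z}) \) such that \( AU = H \). Here \( H = (H' \mid 0) \), where \( H' \in M_{mr}(\mathbb{Z}) \) is in lower echelon form with \( r = \operatorname{rank} A \) nonzero columns. Substituting \( x = Uz \) turns \( Ax = b \) into \( Hz = b \), and this change of variables is a bijection on \( \mathbb{Z}^n \).

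The triangular system is then solved directly. The last \( n-r \) coordinates of \( z \) are free. The first \( r \) coordinates are determined one at a time by forward substitution along the pivot rows of \( H' \). At each pivot I check that the current right-hand side is divisible by the pivot entry; if it is not, the system has no integer solution. The remaining non-pivot rows must then be satisfied identically, and this is checked by direct evaluation. If every check passes, the particular solution is \( y = U(z_1, \dots, z_r, 0, \dots, 0)^T \). The null space of \( Ax = 0 \) is spanned by the last \( n-r \) columns of \( U \): these lie in the kernel because the corresponding columns of \( H \) are zero, and conversely \( U^{-1} \) maps any integer kernel vector to a vector supported on the free coordinates. All of this is polynomial once \( H \) and \( U \) have polynomially bounded bit size.

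The main obstacle is computing \( H \) and \( U \) in polynomial time. Naive Euclidean column operations can make the intermediate entries grow exponentially in bit length. I would handle this in one of two ways.

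\emph{First option.} Follow Kannan--Bachem: process the principal minors incrementally and reduce off-diagonal entries modulo the pivots after each step. One then proves by induction that every intermediate entry is bounded by a polynomial-size multiple of a subdeterminant of \( A \).

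\emph{Second option.} Use the modular method of Domich--Kannan--Trotter. After permuting rows and columns, pick an \( r \times r \) nonsingular submatrix. Its determinant \( d \) has bit length polynomial in the input by Hadamard's inequality \( |d| \le \prod_i \|a_i\| \). The lattice spanned by the columns contains \( d\mathbb{Z}^r \) in the relevant coordinates, so the Hermite form can be computed working modulo \( d \), and all entries stay bounded by \( |d| \). The matrix \( U \) is then recovered by solving \( AU = H \) over \( \mathbb{Q} \) with Cramer-type bounds. Its entries are again bounded by ratios of minors, hence have polynomial size.

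Either way, every number appearing in the computation has bit length polynomial in \( \sum \log|a_{ij}| + \sum \log|b_i| \), so the whole algorithm runs in polynomial time. I expect establishing these size bounds to be the only delicate step.
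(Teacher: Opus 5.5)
The paper gives no proof of this proposition and only cites the literature. Your argument is the standard Hermite-normal-form route of those references, and it is correct. It consists of a unimodular $U$ with $AU=(H'\mid 0)$, forward substitution with divisibility checks at the pivots, and the last $n-r$ columns of $U$ as a basis of the integer kernel. Your first option (Kannan--Bachem) supplies the polynomial bounds on $H$ and $U$.

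One step of your second option does not work as written. When $A$ is not square and nonsingular, the equation $AU=H$ does not determine $U$ over $\mathbb{Q}$, and a rational solution need not be integral or unimodular. In fact the last $n-r$ columns of $U$ are exactly the integral null-space basis you are trying to compute, so they cannot be obtained by Cramer's rule.

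The usual repair is as follows:
\begin{enumerate}
\item Take $r$ linearly independent rows $(A_1\mid A_2)$ of $A$, chosen greedily from the top, with columns permuted so that $A_1$ is nonsingular.
\item Form the nonsingular matrix $M=\bigl(\begin{smallmatrix}A_1 & A_2\\ 0 & I_{n-r}\end{smallmatrix}\bigr)$, whose determinant is $\det A_1$.
\item Compute the Hermite form of $M$ modulo $|\det A_1|$.
\item Set $U=M^{-1}\,\mathrm{HNF}(M)$, composed with the column permutation.
\end{enumerate}
This $U$ is unimodular with polynomially bounded entries. Moreover, the remaining rows of $A$ are rational combinations of the chosen rows above them, so $AU$ has exactly the echelon form $(H'\mid 0)$ that your forward substitution needs.
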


\section{Isomorphisms of extensions}\label{siso}

Given a group word \( w(t_1, \dots, t_k) \) in variables \( t_1, \dots, t_k \), we write \( w(\overline{t}) \) as a shorthand.
Similarly, if \( g_1, \dots, g_k \) are elements of some group \( G \), we write \( w(\overline{g}) \) for \( w(g_1, \dots, g_k) \in G \).

The following result gives a criterion when two group extensions are isomorphic.

\begin{theorem}\label{kgenext}
	Let \( G \) and \( G_0 \) be finite groups, and let \( A \unlhd G \) and \( A_0 \unlhd G_0 \).
	Let a group \( H \) be given by generators and relators:
	\[ H = \langle x_1, \dots, x_k \mid w_i(\overline{x}) = 1,\, i = 1, \dots, m \rangle. \]
	Let \( g_1, \dots, g_k \in G \) be such that \( g_iA \), \( i = 1, \dots, k \), generate \( G/A \) and the map \( g_iA \mapsto x_i \)
	induces an isomorphism between \( G/A \) and \( H \). Similarly, let \( g^0_1, \dots, g^0_k \in G_0 \) be such that \( g^0_iA_0 \), \( i = 1, \dots, k \),
	generate \( G_0/A_0 \) and \( g^0_iA_0 \mapsto x_i \) induces an isomorphism between \( G_0/A_0 \) and \( H \).
	Let \( u_j = w_j(\overline{g}) \in A \), and \( u^0_j = w_j(\overline{g^0}) \in A_0 \), \( j = 1, \dots, m \).

	Suppose that there exists an isomorphism \( \phi : A \to A_0 \) such that \( \phi(x^{g_i}) = \phi(x)^{g^0_i} \) for all \( x \in A \) and \( i = 1, \dots, k \),
	and \( \phi(u_j) = u_j^0 \) for all \( j = 1, \dots, m \). Then there exists a unique isomorphism \( \Phi : G \to G_0 \) which extends \( \phi \)
	and maps \( g_i \) to \( g^0_i \) for \( i = 1, \dots, k \).
\end{theorem}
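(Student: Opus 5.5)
The plan is to define \( \Phi \) explicitly on "normal forms" \( a\cdot w(\overline{g}) \) with \( a\in A \) and \( w \) a word, by setting
\[ \Phi\bigl(a\, w(\overline{g})\bigr) = \phi(a)\, w(\overline{g^0}). \]
Then I check that this is well defined, multiplicative and bijective. Every element of \( G \) has such a form, since the cosets \( g_iA \) generate \( G/A \). Uniqueness is immediate: \( G \) is generated by \( A \) and \( g_1,\dots,g_k \), so any isomorphism extending \( \phi \) and sending \( g_i\mapsto g_i^0 \) must agree with this formula.

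\textbf{Step 1 (conjugation compatibility for arbitrary words).} For every word \( v \) in the free group \( F \) on \( x_1,\dots,x_k \) and every \( a\in A \), I show
\[ \phi\bigl(v(\overline{g})\, a\, v(\overline{g})^{-1}\bigr) = v(\overline{g^0})\,\phi(a)\,v(\overline{g^0})^{-1}. \]
The hypothesis gives this for conjugation by each \( g_i \). Since conjugation by \( g_i \) is a bijection of \( A \), applying the hypothesis to \( x = g_i a g_i^{-1} \) gives the same identity for \( g_i^{-1} \). Induction on the length of \( v \) then finishes the step. Normality of \( A \) and \( A_0 \) ensures every expression stays inside \( A \), respectively \( A_0 \).

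\textbf{Step 2 (key lemma).} If \( w\in F \) becomes trivial in \( H \), then \( w(\overline{g})\in A \) and \( \phi(w(\overline{g})) = w(\overline{g^0}) \).
\begin{itemize}
\item Since \( w \) lies in the normal closure of the relators, \( w = \prod_l v_l\, w_{j_l}^{\pm1} v_l^{-1} \) in \( F \).
\item Evaluating at \( \overline{g} \) gives \( w(\overline{g}) = \prod_l v_l(\overline{g})\, u_{j_l}^{\pm1} v_l(\overline{g})^{-1} \), a product of elements of \( A \).
\item Applying \( \phi \), which is a homomorphism, together with Step 1 and \( \phi(u_j)=u_j^0 \), yields \( \prod_l v_l(\overline{g^0})\,(u^0_{j_l})^{\pm1} v_l(\overline{g^0})^{-1} = w(\overline{g^0}) \).
\end{itemize}

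\textbf{Step 3 (well-definedness).} I expect this to be the main point, and it is exactly what Step 2 provides. Suppose \( a\,w(\overline{g}) = b\,v(\overline{g}) \). Then \( b^{-1}a = v(\overline{g})w(\overline{g})^{-1} \). The word \( vw^{-1} \) maps to \( 1 \) in \( G/A \), hence is trivial in \( H \) via the isomorphism \( g_iA\mapsto x_i \). Step 2 then gives \( \phi(b^{-1}a) = v(\overline{g^0})w(\overline{g^0})^{-1} \), i.e. \( \phi(a)w(\overline{g^0}) = \phi(b)v(\overline{g^0}) \).

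\textbf{Step 4 (homomorphism and bijectivity).}
\begin{itemize}
\item Multiplicativity: write \( (a\,w(\overline{g}))(b\,v(\overline{g})) = a\,\bigl(w(\overline{g})\,b\,w(\overline{g})^{-1}\bigr)\,(wv)(\overline{g}) \) and apply Step 1.
\item Surjectivity: the image contains \( A_0 \) and all \( g_i^0 \), which generate \( G_0 \).
\item Conclusion: \( |G| = |A|\,|H| = |A_0|\,|H| = |G_0| \), so \( \Phi \) is bijective. Alternatively, the symmetric construction with \( \phi^{-1} \) gives an explicit inverse.
\end{itemize}
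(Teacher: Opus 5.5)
Your proposal is correct and follows essentially the same route as the paper: define \( \Phi \) on normal forms, derive well-definedness from the fact that a word trivial in \( H \) is a product of conjugates of relators, check multiplicativity via compatibility of \( \phi \) with conjugation by arbitrary words, and get bijectivity from surjectivity plus equal orders. The differences are cosmetic. The paper writes normal forms as \( w(\overline{g})a \) rather than \( a\,w(\overline{g}) \), does the relator argument inline rather than as a separate lemma, and handles inverse relators by closing the relator set under inverses where you write \( w_j^{\pm1} \). Your Step 1 also spells out conjugation by \( g_i^{-1} \), which the paper uses tacitly.
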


Note that the requirements of Theorem~\ref{kgenext} are easily satisfied if the groups \( G \) and \( G_0 \) are isomorphic.
Indeed, let \( \Phi : G \to G_0 \) be some isomorphism, and let \( A \unlhd G \) and \( g_1, \dots, g_k \in G \) be such that
\( g_iA \), \( i = 1, \dots, k \) generate \( G/A \). Set \( H = G/A \), and let \( x_i = g_iA \), \( i = 1, \dots, k \) be its generators.
Let \( w_j(\overline{x}) \), \( j = 1, \dots, m \) be some relators and \( u_j = w_j(\overline{g}) \), \( j = 1, \dots, m \).
Now if we set \( A_0 = \Phi(A) \), \( g^0_i = \Phi(g_i) \), \( i = 1, \dots, k \), and \( u^0_j = \Phi(u_j) \), \( j = 1, \dots, m \),
and let \( \phi \) be the restriction of \( \Phi \) on \( A \), then this choice of subgroups and elements satisfies the hypotheses of Theorem~\ref{kgenext}.
\medskip

\noindent
\emph{Proof of Theorem~\ref{kgenext}.}
Given some \( x \in G \), there exists a word \( w(t_1, \dots, t_k) \) such that \( x = w(g_1, \dots, g_k)a = w(\overline{g})a \) for some \( a \in A \).
We define
\[ \Phi(x) = w(g^0_1, \dots, g^0_k)\phi(a) = w(\overline{g^0})\phi(a) \in G_0. \]
Let us check that this definition is correct and does not depend on the choice of \( w \) and \( a \in A \).

Suppose that \( x = w'(\overline{g})a' \) for some word \( w'(t_1, \dots, t_k) \) and \( a' \in A \).
Let \( \psi : G \to H \) be a homomorphism with kernel \( A \) such that \( \psi(g_i) = x_i \), \( i = 1, \dots, k \);
it exists since \( g_iA \mapsto x_i \) extends to an isomorphism between \( G/A \) and \( H \). Now,
\[ \psi(x) = w(x_1, \dots, x_k) = w'(x_1, \dots, x_k). \]
Since \( w(\overline{x}) \) and \( w'(\overline{x}) \) are equal in \( H \), the word \( w(\overline{t})^{-1}w'(\overline{t}) \)
lies in the subgroup of the free group \( F = \mathrm{Free}(t_1, \dots, t_k) \) generated by \( w_j(\overline{t})^z \), \( z \in F \), \( j = 1, \dots, m \).
Without loss of generality, we may assume that the list of relators \( w_1, \dots, w_m \) contains all inverses, i.e.\ for any \( w_j(\overline{t}) \)
there exists \( j' \in \{ 1, \dots, m \} \) such that \( w_j(\overline{t})^{-1} = w_{j'}(\overline{t}) \). Indeed, the addition of inverse
relators does not impose any new restrictions on the isomorphism \( \phi \), since \( \phi(u_j) = u_j^0 \) implies \( \phi(u_j^{-1}) = (u_j^0)^{-1} \).
In particular, \( w(\overline{t})^{-1}w'(\overline{t}) \) can be expressed as a product of elements of the form
\( w_j(\overline{t})^z \), \( z \in F \), \( j = 1, \dots, m \), and we can write
\begin{equation}\label{eq1}
	w'(\overline{t}) = w(\overline{t}) \cdot w_{j_1}(\overline{t})^{z_1(\overline{t})} \cdots w_{j_n}(\overline{t})^{z_n(\overline{t})}
\end{equation}
for some \( j_1, \dots, j_n \in \{ 1, \dots, m \} \) and \( z_1, \dots, z_n \in F \).

In order to prove that \( \Phi(x) \) is defined correctly, we need to show that \( w(\overline{g^0})\phi(a) = w'(\overline{g^0})\phi(a') \).
If we substitute \( g_1, \dots, g_k \) into Equation~(\ref{eq1}) and multiply the equality by \( a' \) from the right, we get
\[ w'(\overline{g})a' = w(\overline{g}) \cdot w_{j_1}(\overline{g})^{z_1(\overline{g})} \cdots w_{j_n}(\overline{g})^{z_n(\overline{g})}a' =
	w(\overline{g}) \cdot u_{j_1}^{z_1(\overline{g})} \cdots u_{j_n}^{z_n(\overline{g})}a'.  \]
The left-hand side of that equality is equal to \( x = w(\overline{g})a \), hence
\[ w(\overline{g})a = w(\overline{g}) \cdot u_{j_1}^{z_1(\overline{g})} \cdots u_{j_n}^{z_n(\overline{g})}a' \]
which implies \( a = u_{j_1}^{z_1(\overline{g})} \cdots u_{j_n}^{z_n(\overline{g})}a' \). If we apply \( \phi \) to that equality, we get
\begin{equation}\label{eq2}
	\phi(a) = (u^0_{j_1})^{z_1(\overline{g^0})} \cdots (u^0_{j_n})^{z_n(\overline{g^0})}\phi(a').
\end{equation}
Here we are using the fact that if \( y \in A \) and \( z(g_1, \dots, g_k) \) is an arbitrary word in \( g_1, \dots, g_k \),
then \( \phi(y^{z(\overline{g})}) = \phi(y)^{z(\overline{g^0})} \).

If we substitute \( g^0_1, \dots, g^0_k \) into Equation~(\ref{eq1}), we obtain
\begin{align*}
	w'(\overline{g^0}) &= w(\overline{g^0}) \cdot w_{j_1}(\overline{g^0})^{z_1(\overline{g^0})} \cdots w_{j_n}(\overline{g^0})^{z_n(\overline{g^0})}\\
			&= w(\overline{g^0}) \cdot (u^0_{j_1})^{z_1(\overline{g^0})} \cdots (u^0_{j_n})^{z_n(\overline{g^0})}.
\end{align*}
If we multiply this equality by \( \phi(a') \) from the right, we get
\[ w'(\overline{g^0})\phi(a') = w(\overline{g^0}) \cdot (u^0_{j_1})^{z_1(\overline{g^0})} \cdots (u^0_{j_n})^{z_n(\overline{g^0})}\phi(a') = w(\overline{g^0})\phi(a), \]
where the second equality follows from Equation~(\ref{eq2}).
Therefore \( \Phi(x) \) does not depend on the choice of \( w \) and \( a \), and hence \( \Phi \) is defined correctly.

It follows immediately from the definition of \( \Phi \) that \( \Phi(a) = \phi(a) \) for \( a \in A \), and \( \Phi(g_i) = g^0_i \) for \( i = 1, \dots, k \).
It is left to show that \( \Phi \) is a homomorphism; note that it automatically will be an isomorphism since \( A_0 = \Phi(A) \) and \( g^0_i = \Phi(g_i) \),
\( 1, \dots, k \), generate \( G_0 \).

Take \( x, y \in G \), and let \( x = w(\overline{g})a \), \( y = w'(\overline{g})a' \) for some words \( w, w' \) and elements \( a, a' \in A \).
Then \( \Phi(x) = w(\overline{g^0})\phi(a) \) and \( \Phi(y) = w'(\overline{g^0})\phi(a') \). Therefore
\begin{multline*}
	\Phi(xy) = \Phi(w(\overline{g})a \cdot w'(\overline{g})a') = \Phi(w(\overline{g})w'(\overline{g}) \cdot a^{w'(\overline{g})}a') = \\
	= w(\overline{g^0})w'(\overline{g^0}) \phi(a^{w'(\overline{g})}a') = w(\overline{g^0})w'(\overline{g^0}) \phi(a)^{w'(\overline{g^0})}\phi(a') = \\
	= w(\overline{g^0})\phi(a) \cdot w'(\overline{g})\phi(a') = \Phi(x)\Phi(y)
\end{multline*}
as required. Finally, the uniqueness of \( \Phi \) easily follows from the fact that it coincides with \( \phi \) on \( A \)
and the condition \( \Phi(g_i) = g^0_i \), \( i = 1, \dots, k \). \qed

\section{Computing unit groups of finite rings}\label{sunits}

Let \( R \) be a finite unitary ring given by its additive generators, and let \( R^\times \) denote its unit group.
Our goal is to compute (polynomially many) elements of \( R \), which generate \( R^\times \) multiplicatively.

Recall that if \( J \) is a nilpotent ideal of \( R \), then every element of \( 1 + J = \{ 1 + x \mid x \in I \} \) is invertible,
so \( 1 + J \) is a subgroup of \( R^\times \).

\begin{lemma}\label{ideal}
	If \( R \) is a finite unitary ring given by additive generators
	and \( J \) is a nilpotent ideal of \( R \) given by additive generators \( T \), then we can compute multiplicative generators
	of \( 1 + J \) in time polynomial in \( \log |R| \) and \( |T| \).
\end{lemma}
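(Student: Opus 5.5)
We will use the filtration of \( J \) by its powers. Since \( J \) is nilpotent, there is a chain
\[ J = J^1 \supseteq J^2 \supseteq \dots \supseteq J^c = 0, \]
and since each inclusion \( J^{i+1} \subsetneq J^i \) is proper until we reach zero, we have \( c \le \log |R| + 1 \). Each \( J^i \) is an ideal of \( R \). Using the polynomial-time routines for products of ideals from~\cite[Chapter~3]{iuliana-thesis}, we compute additive generators \( T_i \) for every \( J^i \). After reducing generating sets as in Section~\ref{sprem}, each \( T_i \) has size at most \( t = O(\log|R|) \), and the whole filtration is obtained in time polynomial in \( \log|R| \) and \( |T| \).

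The key observation is that for every \( i \ge 1 \) the map \( J^i \to (1+J^i)/(1+J^{i+1}) \), \( x \mapsto (1+x)(1+J^{i+1}) \), is a surjective group homomorphism from the additive group of \( J^i \). Indeed,
\[ (1+x)(1+y) = 1 + x + y + xy, \qquad xy \in J^{2i} \subseteq J^{i+1}, \]
and \( (1+x+y+xy)(1+x+y)^{-1} \in 1 + J^{i+1} \), because \( 1+J^{i+1} \) is a normal subgroup of \( 1+J \). The latter holds since \( J^{i+1} \) is an ideal: \( (1+z)^{-1}(1+a)(1+z) - 1 \in J^{i+1} \) whenever \( a \in J^{i+1} \). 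So additive generators \( x \) of \( J^i \) give elements \( 1+x \) that generate \( 1+J^i \) modulo \( 1+J^{i+1} \).

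We therefore claim that the set
\[ S = \{\, 1+x \mid x \in T_i,\ 1 \le i < c \,\} \]
generates \( 1+J \) multiplicatively. We prove this by downward induction on \( i \): the subgroup \( \langle 1+x \mid x \in T_j,\ j \ge i\rangle \) contains \( 1+J^{i+1} \) by the induction hypothesis, and it maps onto the quotient \( (1+J^i)/(1+J^{i+1}) \). Hence it equals \( 1+J^i \), and at \( i=1 \) we get \( 1+J \). The size of \( S \) is \( O((\log|R|)^2) \), so the output is polynomial.

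The main obstacle is verifying the homomorphism and normality claims carefully in the noncommutative setting, since that is where the argument could go wrong. The computational side is routine: it only requires that ideal products are computable and that the chain has logarithmic length.
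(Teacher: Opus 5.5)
Your proof is correct and follows essentially the paper's route: filter \(1+J\) by the subgroups \(1+I\) for powers \(I\) of \(J\), note that each section is an image of the additive group of \(I\), and lift additive generators. The paper uses the isomorphism \((1+I)/(1+I^2)\cong I/I^2\), \(1+x\mapsto x+I^2\), along the doubling chain \(J^{2^j}\); you use consecutive powers \(J^i\supseteq J^{i+1}\), which is equally polynomial since \(c\le \log|R|+1\). One small correction: \((1+x+y+xy)(1+x+y)^{-1}\) lies in \(1+J^{i+1}\) because it equals \(1+xy(1+x+y)^{-1}\) and \(J^{i+1}\) is a two-sided ideal, not because of normality; normality is only needed for the quotient to be a group.
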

\begin{proof}
	We follow the proofs of~\cite[Lemma~4.1 and Proposition~4.2]{hofmann}.
	Let \( I \) be some nilpotent ideal of \( R \) given by its additive generators. First we show how to compute elements \( 1+g_1, \dots, 1+g_r \in 1 + I \)
	such that \( (1+g_i)(1+I^2) \), \( i = 1, \dots, r \), generate \( (1+I)/(1+I^2) \) multiplicatively.

	Note that we can compute additive generators of \( I^2 \) in polynomial time by considering all pairwise products of the additive generators of~\( I \).
	Now, observe that the map \( \phi : 1+I \to I/I^2 \) given by \( \phi(1+x) = x+I^2 \), \( x \in I \), is a homomorphism
	from the multiplicative group \( 1+I \) to the additive group \( I/I^2 \). The kernel of \( \phi \) is \( 1+I^2 \),
	and note that for any \( g \in I/I^2 \) we can compute some preimage in \( 1+I \) in polynomial time.	
	We can compute at most \( \log |I/I^2| \leq \log |R| \) additive generators of \( I/I^2 \) in polynomial time,
	and let \( 1 + g_i \), \( i = 1, \dots, r \), be preimages in \( 1+I \) of these generators.

	Returning to the original task, let \( n \) be the minimal positive integer such that \( J^n = 0 \).
	Set \( k = \lceil \log(n) \rceil \) and \( J_i = J^{2^{k-i}} \), \( i = 0, \dots, k \).
	We have a series of normal subgroups \( N_i = 1 + J_i \), \( i = 0, \dots, k \), where \( N_0 = 1 \) and \( N_k = 1 + J \).
	By the argument from the previous paragraph applied to each quotient \( N_{i+1}/N_i \), \( i = 0, \dots, k-1 \),
	we obtain generators of the group \( N_k \); note that the total number of generators obtained is at most \( k \cdot \log |R| \).
	Since \( k \leq 1 + \log |R| \), the algorithm works in polynomial time.
\end{proof}

Recall that a finite ring \( R \) is \emph{semisimple}, if its Jacobson radical is trivial. If \( R \) is a \( \mathbb{Z}_p \)-algebra for some prime \( p \),
then the Wedderburn--Artin theorem implies that \( R \) is a direct sum of its minimal nontrivial ideals \( M_1, \dots, M_k \),
and every \( M_i \), \( i = 1, \dots, k \), is isomorphic to a full matrix ring over a finite field of characteristic \( p \).
We will be interested in finding these ideals and explicit isomorphisms with matrix rings computationally.

Let \( \mathbb{F} \) be a finite field of characteristic \( p \) and order \( p^f \), \( f \geq 1 \).
Since \( \mathbb{F} \) is a ring, we can specify it to our algorithms by additive generators. In particular, field operations
in \( \mathbb{F} \) can be performed in time polynomial in \( f \) and \( \log p \).
An \( n \times n \) matrix over \( \mathbb{F} \) is encoded directly in the algorithm (as a matrix of elements from~\( \mathbb{F} \))
and hence has size polynomial in \( n, f \) and \( \log p \).
We say that an isomorphism between some \( M_i \) and a matrix ring \( M_n(\mathbb{F}) \) can be found explicitly,
if given an element of \( M_i \) we can compute the corresponding \( n \times n \) matrix in polynomial time, and vice versa.

The following result is implicit in~\cite{ronyai}.

\begin{lemma}\label{wedder}
	Let \( R \) be a finite unitary semisimple ring given by its additive generators.
	Assume that \( R \) is a \( \mathbb{Z}_p \)-algebra for a prime \( p \), and \( d \) is the dimension of \( R \) over \( \mathbb{Z}_p \).
	The following tasks can be solved in time polynomial in \( p \) and \( d \).
	\begin{enumerate}
		\item Find minimal ideals \( M_1, \dots, M_k \) of \( R \).
		\item For each \( i = 1, \dots, k \), one can find an explicit isomorphism of \( M_i \) with a matrix ring \( M_{n_i}(\mathbb{F}_{q_i}) \)
			over a finite field \( \mathbb{F}_{q_i} \) of order \( q_i \) and characteristic~\( p \).
	\end{enumerate}
\end{lemma}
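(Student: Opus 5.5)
The plan is to make Rónyai's algorithms explicit, using the fact that we may spend time polynomial in \( p \). This makes factoring polynomials over \( \mathbb{F}_p \) deterministic in polynomial time, for example by Berlekamp's algorithm, which runs in time polynomial in \( p \) and the degree. The decomposition will be built in two layers: first split off the center, then split each simple component.

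\textbf{Step 1: the center and its idempotents.} First compute the center \( Z(R) \) by solving the linear system \( xe_i = e_ix \) over \( \mathbb{Z}_p \). Since \( R \) is semisimple, \( Z(R) \) is a commutative semisimple algebra, hence a direct sum of finite fields \( \mathbb{F}_{q_1} \oplus \dots \oplus \mathbb{F}_{q_k} \). The primitive idempotents \( \varepsilon_1, \dots, \varepsilon_k \) of \( Z(R) \) give the minimal ideals \( M_i = R\varepsilon_i \). To find these idempotents, pick a random element \( z \in Z(R) \), or iterate over a basis, and compute its minimal polynomial over \( \mathbb{Z}_p \). This polynomial is squarefree because \( Z(R) \) is reduced. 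Factor it by Berlekamp's algorithm and use the Chinese remainder theorem to obtain idempotents splitting \( Z(R) \). Refine these splittings over all basis elements of \( Z(R) \) until every component is a field, which can be tested by checking that each basis element has an irreducible minimal polynomial there. This is deterministic once we cycle through the basis, since a commutative semisimple algebra generated by the basis splits completely under the joint eigen-decomposition. This settles item 1, and in each \( M_i \) the center \( \varepsilon_i Z(R) \) is the field \( \mathbb{F}_{q_i} \). We realize this field explicitly as \( \mathbb{F}_p[t]/(f) \) by finding a primitive element, for instance by searching or by composing generators.

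\textbf{Step 2: each simple component \( M = M_i \simeq M_n(\mathbb{F}_q) \).} Regard \( M \) as an algebra over \( \mathbb{F} = \mathbb{F}_q \), its center, of dimension \( n^2 \). The key task is to find a primitive idempotent, or equivalently a zero divisor. Once we have a rank-one idempotent \( e \), the left ideal \( Me \) is an \( n \)-dimensional \( \mathbb{F} \)-space. Left multiplication then gives an explicit isomorphism \( M \to \End_{\mathbb{F}}(Me) \simeq M_n(\mathbb{F}) \) after choosing a basis. The inverse map is computed by linear algebra, since the map is an \( \mathbb{F} \)-linear bijection. To find zero divisors, follow Rónyai: for a non-central element \( x \), its minimal polynomial over \( \mathbb{F} \) is reducible or has degree less than \( n \), and factoring it over \( \mathbb{F}_q \) (polynomial in \( p \) and \( \log q \) via Berlekamp) gives a nontrivial idempotent \( e \). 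We then recurse on the corner algebras \( eMe \) and \( (1-e)M(1-e) \), which are again full matrix algebras of smaller size. Once \( 1 \) is decomposed into \( n \) orthogonal primitive idempotents, we obtain \( e \).

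\textbf{Main obstacle.} The hard step is guaranteeing a non-central element whose minimal polynomial actually factors nontrivially. The difficulty arises only when every tried element generates a subfield, which would suggest a division algebra. Wedderburn's little theorem rules this out, but it must be made effective. I would use Rónyai's approach: take a maximal subfield \( L \) generated by one element. If \( n > 1 \), then \( M \) is not a field, and the conjugation action of \( L^\times \) (its Frobenius-type automorphism) yields, through solving a linear system \( xa = \sigma(a)x \), an element \( x \) generating a cyclic algebra structure. Finding a zero divisor then reduces to computing a norm equation solution in the finite field extension \( L/\mathbb{F} \), which is always solvable since norms are surjective for finite fields. This can be done in polynomial time, for instance by searching with time polynomial in \( p \) when needed, or by Rónyai's deterministic reduction. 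Throughout, all dimension counts are at most \( d \), and all factorizations are of polynomials of degree at most \( d \) over fields of characteristic \( p \), so the total running time is polynomial in \( p \) and \( d \).
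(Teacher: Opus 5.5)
Your proposal is essentially the paper's argument: the paper simply cites R\'onyai's algorithms for the minimal ideals and for the explicit isomorphism with \( M_{n_i}(\mathbb{F}_{q_i}) \), and removes their factoring oracle using Berlekamp's deterministic algorithm, which is affordable because time polynomial in \( p \) is allowed. Two details of your reconstruction need correcting, though the outline is fine. Searching for a norm preimage or a primitive element costs time exponential in \( d \), so it cannot be used. A non-central element whose minimal polynomial is irreducible of degree below \( n \) yields a subfield, to be handled through its centralizer, rather than an idempotent. R\'onyai's deterministic reduction, which you invoke as the fallback, covers both points.
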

\begin{proof}
	Minimal ideals can be computed in polynomial time by~\cite[Corollary~3.2]{ronyai}.
	A polynomial-time algorithm for finding an explicit isomorphism from \( M_i \) to \( M_{n_i}(\mathbb{F}_{q_i}) \)
	follows from~\cite[Theorem~5.2]{ronyai}, see also~\cite[Corollary~4.7]{ronyai}. The algorithm in~\cite[Theorem~5.2]{ronyai} is an \( f \)-algorithm,
	i.e.\ it is a polynomial-time algorithm which has access to an oracle for factoring polynomials over finite fields.
	Since our algorithms are allowed to run in time polynomial in \( d \) and \( p \) (versus \( \log p \)),
	we can use Berlekamp's factoring algorithm~\cite{berle} to eliminate the use of the oracle.
\end{proof}

Now we are ready to prove the main result of this section and show that the unit group of a finite ring can be computed in polynomial time
if the largest prime divisor of the order of the ring is considered part of the input.
\medskip

\noindent
\emph{Proof of Theorem~\ref{units}.}
Let \( R \) be a finite unitary ring given by its additive generators, and let \( p_{\max} \) be the largest prime divisor of \( |R| \).
In time polynomial in \( p_{\max} \) and \( \log |R| \) we can compute \( |R| \) and decompose it into a product of prime power factors.
Given this factorization, decompose \( R^+ \) into a direct sum of abelian groups
\[ R^+ = R_1 \oplus \dots \oplus R_s, \]
where each \( R_i \) has prime power order, \( s \leq \log |R| \), and \( |R_i| \) and \( |R_j| \) are coprime for \( i \neq j \).
Observe that \( R_i \), \( i = 1, \dots, s \), is a subring of \( R \), and note that
\[ R^\times = R_1^\times \times \dots \times R_s^\times. \]
So it suffices to compute the unit group of a ring of prime power order.

Without loss of generality we may assume that \( |R| \) is a power of a prime~\( p \).
Let \( J \) be the Jacobson radical of \( R \). There is an exact sequence of groups
\[ 1 \longrightarrow 1+J \longrightarrow R^\times \longrightarrow (R/J)^\times \longrightarrow 1. \]
We can compute the ideal \( pR \) and the quotient-ring \( R/pR \) in polynomial time,
and since \( pR \leq J \), we can compute \( J \) by taking the preimage of the Jacobson radical of \( R/pR \).
The ring \( R/pR \) is a \( \mathbb{Z}_p \)-algebra, and its Jacobson radical can be computed in polynomial time by~\cite[Theorem~1.5A]{fr}.
The generators of \( 1 + J \) are obtained by Lemma~\ref{ideal}.

We can compute the quotient-ring \( R/J \) in polynomial time, and since \( R/J \) is a semisimple \( \mathbb{Z}_p \)-algebra,
it is a direct sum of its minimal ideals
\[ R/J = M_1 \oplus \dots \oplus M_k, \]
where each \( M_i \), \( i = 1, \dots, k \), is isomorphic to a full matrix ring \( M_{n_i}(q_i) \), and \( k \leq \log |R| \).
This decomposition and explicit isomorphisms between \( M_i \) and \( M_{n_i}(q_i) \) can be found in polynomial time by Lemma~\ref{wedder}.
Since
\[ (R/J)^\times = M_1^\times \times \dots \times M_k^\times, \]
and \( M_i^\times \simeq \GL_{n_i}(q_i) \), \( i = 1, \dots, k \),
it suffices to find multiplicative generators of \( \GL_{n_i}(q_i) \leq M_{n_i}(q_i) \).
Indeed, once we compute the multiplicative generators of \( M_i^\times \), \( i = 1, \dots, k \),
we can lift them back to \( R \) and together with generators of \( 1 + J \) we obtain the generators of \( R^\times \), as required.

Note that we can find generators of \( \mathrm{SL}_{n_i}(q_i) \) in polynomial time by~\cite[Section~3]{roncentr}.
In order to find generators of the subgroup of diagonal matrices
\[ D_{n_i}(q_i) = \{ \mathrm{diag}(\alpha_1, \dots, \alpha_{n_i}) \mid \alpha_j \in \mathbb{F}_{q_i}^\times,\, j = 1, \dots, n_i \} \]
it suffices to find generators of the multiplicative group of a finite field \( \mathbb{F}_{q_i} \).
By~\cite[Section~2]{lenstra}, in polynomial time we can find an irreducible polynomial \( f \in \mathbb{Z}_p[x] \)
and an explicit isomorphism between \( \mathbb{F}_{q_i} \) and the quotient-ring \( \mathbb{Z}_p[x] / (f) \).
By~\cite[Theorem~1.1]{shoup}, in time polynomial in \( \deg f = \log q_i / \log p \) and \( p \) we can find
a subset of elements of \( \mathbb{Z}_p[x] / (f) \) (of polynomial size) which contains a primitive root,
in particular, we can compute multiplicative generators of \( \mathbb{F}_{q_i}^\times \), and hence of \( D_{n_i}(q_i) \), in polynomial time.
Note that \( \deg f \leq \log |R| \) and \( p \leq p_{\max} \).

To finish the proof, we note that the generators of \( \GL_{n_i}(q_i) \) can be obtained as a union of generators of
\( \mathrm{SL}_{n_i}(q_i) \) and \( D_{n_i}(q_i) \).\qed
\medskip

We will apply Theorem~\ref{units} to the case when the ring in question acts on an abelian group by endomorphisms.

Let \( A \) be a finite abelian group given by its Cayley table. Let \( \End(A) \) be the ring of endomorphisms of \( A \).
We can encode every element \( f \in \End(A) \) as a tuple \( (f(a))_{a \in A} \) of \( |A| \) elements of \( A \),
so the additive group \( \End(A)^+ \) can be viewed as a subgroup of the direct power~\( A^{|A|} \).
The size of each element in this representation is polynomial in \( |A| \), and we can compute element sums and products (compositions) in polynomial time.
Note that in general the order of \( \End(A) \) is not polynomial in \( |A| \), but we have a bound \( |\End(A)| \leq |A|^{\log |A|} \).

Let \( \End(A)^+ \) denote the additive group of \( \End(A) \), and let \( e_1, \dots, e_t \) be cyclic generators of \( A \).
For each \( a \in A \) set \( N(a) = \{ b \in A \mid b^{|a|} = 1 \} \). For \( i = 1, \dots, t \) and \( a \in N(e_i) \)
we can define an endomorphism \( f_{i, a} \) of \( A \) by \( f_{i, a}(e_1^{x_1} \cdots e_t^{x_t}) = a^{x_i} \), 
where \( x_1, \dots x_t \in \mathbb{Z} \). Note that \( f_{i, a} \), \( i = 1, \dots, t \), \( a \in N(e_i) \), generate
\( \End(A)^+ \) additively.
One can compute the cyclic generators of \( \End(A)^+ \) and express any given element
in terms of these generators in time polynomial in~\( |A| \).

Let \( R \) be a unitary subring of \( \End(A) \) given by its additive generators.
Note that \( R^\times \) acts on \( A \) by automorphisms, in particular, every element of \( R^\times \) is a permutation on \( A \).
Hence we can specify \( R^\times \) as a permutation group in terms of its permutation generators; their number is polynomial in \( |A| \).
All prime divisors of \( |\End(A)| \) also divide \( |A| \), hence the largest prime divisor of \( |R| \) is bounded by~\( |A| \).

Theorem~\ref{units} implies the following.

\begin{corollary}\label{cunits}
	Let \( A \) be a finite abelian group given by its Cayley table. If \( R \) is a unitary subring of \( \End(A) \) given by its additive generators,
	then we can compute the permutation generators of \( R^\times \) in polynomial time.
\end{corollary}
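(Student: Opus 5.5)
The plan is to apply Theorem~\ref{units} directly to \( R \) and then convert its output into permutation generators acting on \( A \). Set \( n = |A| \).

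\emph{Step 1: present \( R \) in the form Theorem~\ref{units} requires.} The ring \( R \) is given by a spanning set \( S \) of endomorphisms, each encoded as a tuple \( (f(a))_{a\in A} \). Since \( \End(A)^+ \) is a subgroup of \( A^{|A|} \), it has an explicitly computable cyclic decomposition (noted above). I would express every element of \( S \) in terms of these cyclic generators. Using the polynomial-time algorithms for abelian groups given by cyclic generators, I would then compute cyclic generators \( r_1,\dots,r_m \) of the subgroup \( R^+ \), with \( m \le \log|R| \le (\log n)^2 \). I would also record each \( r_i \) as a concrete map \( A\to A \).

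Next I would compute the structure constants. For each pair \( i,j \), form the composition \( r_i r_j \) as a map in time polynomial in \( n \). Then express it as \( \sum_k \alpha_{ijk} r_k \) by solving a linear system over the finite abelian group \( R^+ \). This can be done with Proposition~\ref{diosolve}, or with the cited membership and representation algorithms for abelian groups. This yields \( R \) given by additive generators, with input length polynomial in \( \log|R| \le (\log n)^2 \).

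\emph{Step 2: apply Theorem~\ref{units}.} Every prime divisor of \( |R| \) divides \( |\End(A)| \), hence divides \( |A| \), so \( p_{\max} \le n \). Theorem~\ref{units} therefore returns multiplicative generators \( u_1,\dots,u_s \) of \( R^\times \) in time polynomial in \( \log|R| \) and \( n \). These generators are written as integer combinations of the \( r_k \), and \( s \) is polynomially bounded because the algorithm runs in polynomial time.

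\emph{Step 3: convert to permutations.} For each \( u_l = \sum_k c_{lk} r_k \), I would evaluate \( u_l(a) = \sum_k c_{lk} r_k(a) \) for every \( a\in A \). Each coefficient \( c_{lk} \) can be reduced modulo the order of \( r_k \), so this evaluation takes time polynomial in \( n \). Because \( u_l \) is a unit of \( R \subseteq \End(A) \), it is an automorphism of \( A \) and hence a permutation of \( A \). The resulting \( s \) permutations of degree \( n \) generate \( R^\times \) as a permutation group, and if desired the set can be reduced to at most \( n^2 \) generators.

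The main obstacle I expect is Step 1: turning an arbitrary spanning set of maps into the structured "additive generators" representation with correct structure constants. The key point is that \( |R| \) may be superpolynomial in \( n \) (up to \( n^{\log n} \)), so \( R \) cannot be enumerated. Every computation must therefore go through the cyclic decomposition of \( \End(A)^+ \) and linear algebra over abelian groups. With that in place, the remaining steps follow directly from Theorem~\ref{units}.
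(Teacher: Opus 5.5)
Your proposal is correct and follows the paper's route. The paper likewise gets the corollary by applying Theorem~\ref{units} with \( p_{\max} \le |A| \), since every prime dividing \( |\End(A)| \) divides \( |A| \), and then reading the resulting units as permutations of \( A \). Your Step~1 spells out the conversion to the additive-generator representation with structure constants. The paper covers that step only implicitly, by remarking that cyclic generators of \( \End(A)^+ \), and coordinates of any element with respect to them, can be computed in time polynomial in \( |A| \).
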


\section{Proof of Theorem~\ref{main}}\label{sproof}

We separate the proof into two parts: finding a required isomorphism between \( G \) and \( G_0 \), and computing the coset of all isomorphisms.
Although we could combine both parts into one algorithm, it would be easier for understanding to consider these tasks separately.

\subsection{Finding an isomorphism}\label{findiso}

We will derive the first part of Theorem~\ref{main} from the following more precise statement.

\begin{proposition}\label{findis}
	Let \( G \) and \( G_0 \) be finite groups of order \( n \) given by their Cayley tables.
	Suppose we are also given abelian normal subgroups \( A \unlhd G \) and \( A_0 \unlhd G_0 \),
	and elements \( g_1, \dots, g_k \in G \), \( g^0_1, \dots, g^0_k \in G_0 \), such that
	\( g_1A, \dots, g_kA \) generate~\( G \) and \( g^0_1A, \dots, g^0_kA \) generate~\( G_0 \).
	We can test in time polynomial in \( n \) and \( k \) whether there exists an isomorphism \( \Phi : G \to G_0 \) such that \( \Phi(A) = A_0 \)
	and \( \Phi(g_i) = g^0_i \), \( i = 1, \dots, k \). If such an isomorphism exists, then we can find it in the same time.
\end{proposition}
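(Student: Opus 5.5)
Given fixed generators, the isomorphism \( \Phi \) is determined by its restriction \( \phi = \Phi|_A \). So the plan is to reduce to Theorem~\ref{kgenext} and then to a module isomorphism problem (Proposition~\ref{modiso}) with a side condition on finitely many elements.

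\textbf{Step 1: compatibility of the top groups.} First check that \( g_iA \mapsto g^0_iA_0 \) extends to an isomorphism \( G/A \to G_0/A_0 \). This is necessary, since any \( \Phi \) as required induces such a map. We can test it in polynomial time by building the Cayley tables of both quotients and trying to extend the map along words: do a breadth-first search over the Cayley graph of \( G/A \) with respect to the \( g_iA \), checking consistency and bijectivity.

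\textbf{Step 2: relators.} Take \( H = G/A \) with generators \( x_i = g_iA \). Obtain a polynomial-size presentation by using the Cayley graph: a spanning tree gives a Schreier transversal, and each non-tree edge gives a relator \( w_j \). There are at most \( nk \) relators. Compute \( u_j = w_j(\overline{g}) \in A \) and \( u^0_j = w_j(\overline{g^0}) \in A_0 \) by evaluating the words as paths, which is polynomial since each relator is a short path. By Theorem~\ref{kgenext} and the remark after it, the desired \( \Phi \) exists if and only if there is a group isomorphism \( \phi : A \to A_0 \) with
\[ \phi(x^{g_i}) = \phi(x)^{g^0_i} \text{ for all } i \quad\text{and}\quad \phi(u_j) = u^0_j \text{ for all } j. \]

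\textbf{Step 3: module formulation.} The first condition says \( \phi \) is an isomorphism of modules over the free group, equivalently of \( \mathbb{Z}[H] \)-modules. Let \( R \subseteq \End(A) \) be the unitary subring generated by the conjugation maps \( c_i : x \mapsto x^{g_i} \). Compute \( R \) additively by closing under products and sums as a subgroup of \( \End(A)^+ \), giving additive generators in polynomial time. Define the action of \( R \) on \( A_0 \) via \( c_i \mapsto c^0_i \). This requires checking that the assignment is a well-defined ring action, namely that every relation among the \( c_i \) in \( R \) also holds among the \( c^0_i \). To test this, work with the ring generated by the pairs \( (c_i, c^0_i) \) inside \( \End(A) \times \End(A_0) \) and check that its projection to the first factor is injective. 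If it is not, there is no \( \phi \), since any intertwining \( \phi \) forces the relations to transfer. Then apply Proposition~\ref{modiso} to decide whether \( A \cong A_0 \) as \( R \)-modules, and if so obtain some \( \mu \).

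\textbf{Step 4 (main obstacle): the conditions \( \phi(u_j) = u^0_j \).} All \( R \)-isomorphisms are \( \mu \circ \alpha \) with \( \alpha \in \Aut_R(A) = E^\times \), where \( E = \End_R(A) \). Compute \( E \) as a subring of \( \End(A) \) by solving linear equations over \( \mathbb{Z} \) (Proposition~\ref{diosolve}). Then get permutation generators of \( E^\times \) by Corollary~\ref{cunits}. The remaining question is whether some \( \alpha \in E^\times \) maps the tuple \( (u_1,\dots,u_m) \) to \( (\mu^{-1}(u^0_1),\dots,\mu^{-1}(u^0_m)) \). This is an orbit problem for a permutation group, but on tuples, where the orbit may be exponentially large. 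The fix I would try is iterated pointwise stabilizers. Let \( E^\times \) act on \( A \), a set of polynomial size. For each \( j \) in turn, find an element sending \( u_j \) to its target: compute the orbit of the current point and a transversal element, apply it, then pass to the stabilizer of the point already matched. All of this is polynomial by the permutation group toolkit of~\cite{seress}. If some step fails, no \( \phi \) exists. Otherwise the composite yields \( \phi \), and Theorem~\ref{kgenext} builds \( \Phi \) explicitly by evaluating words along the spanning tree. The unit group computation of Corollary~\ref{cunits} is exactly what makes this step feasible.
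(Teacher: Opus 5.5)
Your proposal is correct and follows the paper's proof essentially step for step. You check that \( g_iA \mapsto g^0_iA_0 \) gives an isomorphism of quotients, write a finite presentation and compute \( u_j, u^0_j \), obtain an intertwining \( \mu \) from Proposition~\ref{modiso}, and then search the unit group of the centralizer ring \( \End_R(A) \) (computed via Proposition~\ref{diosolve} and Corollary~\ref{cunits}) for an element sending \( (u_j) \) to \( (\mu^{-1}(u^0_j)) \).

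The differences are cosmetic. You use a Schreier-transversal presentation with at most \( nk \) relators, where the paper uses the \( |H|^2 \) multiplication-table relators. You use the image of \( \mathbb{Z}[H] \) in \( \End(A) \), which is why you need the extra well-definedness check; the paper instead uses \( \mathbb{Z}_n[H] \) acting on \( A_0 \) through the Step~1 isomorphism. Finally, you spell out the sequential point-stabilizer argument for the tuple-orbit problem, which the paper only asserts.
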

\begin{proof}
	The algorithm is as follows.
	\begin{enumerate}
		\item Check that the map \( g_iA \mapsto g_i^0A_0 \), \( i = 1, \dots, k \), induces an isomorphism between \( G/A \) and \( G_0/A_0 \).
		\item Compute a presentation of \( H = G/A \) in terms of generators \( x_1, \dots, x_k \) and relators \( w_1, \dots, w_m \)
			such that \( g_iA \mapsto x_i \), \( i = 1, \dots, k \), induces an isomorphism between \( G/A \) and \( H \).
			Set \( u_j = w_j(g_1, \dots, g_k) \), \( j = 1, \dots, m \) and \( u^0_j = w_j(g^0_1, \dots, g^0_k) \), \( j = 1, \dots, m \).
		\item Find an isomorphism \( \mu : A \to A_0 \), such that \( \mu(x^{g_i}) = \mu(g)^{g^0_i} \) for all \( x \in A \) and \( i = 1, \dots, k \).
		\item Find an automorphism \( \theta : A \to A \), such that \( \theta(u_j) = \mu^{-1}(u^0_j) \)
			and \( \theta(x^{g_i}) = \theta(x)^{g_i} \) for all \( x \in A \) and \( i = 1, \dots, k \).
		\item Set \( \phi = \theta \circ \mu : A \to A_0 \),
			and compute \( \Phi \) using \( \Phi(w(g_1, \dots, g_k)x) = w(g^0_1, \dots, g^0_k) \phi(x) \), where \( x \in A \) and \( w \) is a word.
	\end{enumerate}

	Theorem~\ref{kgenext} implies that this algorithm will always correctly find an isomorphism if it exists.
	Now we explain how to perform each step in polynomial time.%
	\medskip

	\noindent
	\textbf{Step 1. Checking \( G/A \simeq G_0/A_0 \).}
	\medskip

	Since \( g_iA \), \( i = 1, \dots, k \), generate \( G/A \), we can express each element of \( G/A \)
	as a word in \( g_1A, \dots, g_kA \), and compute its image in \( G_0/A_0 \) via the map \( g_iA \mapsto g_i^0A_0 \).
	It is easy to check if the resulting map from \( G/A \) to \( G_0/A_0 \) is an isomorphism by bruteforce.
	\medskip

	\noindent
	\textbf{Step 2. Computing the presentation.}
	\medskip

	Given generators \( g_1A, \dots, g_kA \) of \( H = G/A \), we can compute the required presentation in terms of generators
	and relators by writing down the Cayley table of~\( H \). Indeed, for every element \( h \in H \) we can fix some word \( v_h(x_1, \dots, x_k) \)
	such that \( h = v_h(g_1A, \dots, g_kA) \); we can always assume that \( x_i = v_{x_i} \), \( i = 1, \dots, k \).
	The words \( v_{h_1}(\overline{x})v_{h_2}(\overline{x})v_{h_1h_2}(\overline{x})^{-1} \), \( h_1, h_2 \in H \),
	will serve as \( m = |H|^2 \) relators of our presentation. If we denote these words by \( w_1, \dots, w_m \),
	then we can easily compute \( u_j = w_j(\overline{g}) \) and \( u^0_j = w_j(\overline{g^0}) \), \( j = 1, \dots, m \).
	Note that the number of relators and their lengths are bounded polynomially in terms of \( k \) and~\( |H| \).
	\medskip

	\noindent
	\textbf{Step 3. Finding \( \mu \).}
	\medskip

	Recall that we have abelian subgroups \( A \unlhd G \) and \( A_0 \unlhd G_0 \)
	and elements \( g_1, \dots, g_k \in G \), \( g^0_1, \dots, g^0_k \in G_0 \).
	Moreover, \( g_1A, \dots, g_kA \) generate \( G/A \) and \( g^0_1A_0, \dots, g^0_kA_0 \) generate \( G_0/A_0 \), where \( H = G/A \simeq G_0/A_0 \).

	Set \( R = \mathbb{Z}_n[H] \), i.e.\ a group ring of \( H \) over the ring of integers modulo~\( n \).
	Note that the unitary ring \( R \) can be given by \( |H| \leq n \) additive generators.
	Now define the action of \( R \) on \( A \) by \( x^{x_i} = x^{g_i} \), \( i = 1, \dots, k \), \( x \in A \),
	and on \( A_0 \) by \( x_0^{x_i} = x_0^{g^0_i} \), \( i = 1, \dots, k \), \( x_0 \in A_0 \);
	the action of other elements of \( R \) can be deduced by linearity since \( A \) and \( A_0 \) are natural \( \mathbb{Z}_n \)-modules.

	By Proposition~\ref{modiso}, we can compute an \( R \)-isomorphism \( \mu : A \to A_0 \) in polynomial time.
	By the definition of the action of \( R \), we have \( \mu(x^{g_i}) = \mu(x^{x_i}) = \mu(x)^{x_i} = \mu(x)^{g^0_i} \), \( i = 1, \dots, k \), as required.
	\medskip

	\noindent
	\textbf{Step 4. Finding an automorphism \( \theta \).}
	\medskip

	Define a subring \( K \leq \End(A) \) by
	\[ K = \{ \theta \in \End(A) \mid \theta(x^{g_i}) = \theta(x)^{g_i} \text{ for all } x \in A,\, i = 1, \dots, k \}. \]
	Additive generators of \( K \) can be computed in polynomial time by solving a system of linear Diophantine equations,
	see Proposition~\ref{diosolve}. By Corollary~\ref{cunits}, we can compute multiplicative generators of the unit group
	\[ K^\times = \{ \theta \in \Aut(A) \mid \theta(x^{g_i}) = \theta(x)^{g_i} \text{ for all } x \in A,\, i = 1, \dots, k \} \]
	in polynomial time. \( K^\times \) is a permutation group on \( A \) and we can check if tuples
	\( (u_1, \dots, u_m) \) and \( (\mu^{-1}(u^0_1), \dots, \mu^{-1}(u^0_m)) \) lie in the same \( K^\times \)-orbit in
	polynomial time, and find the required \( \theta \in K^\times \) if they do.
	\medskip

	\noindent
	\textbf{Step 5: Computing \( \Phi \).}
	\medskip

	Since \( g_1A, \dots, g_kA \) generate \( G/A \), we can express each element \( g \in G \)
	in the form \( g = w(g_1, \dots, g_k)x \), where \( w \) is some word and \( x \in A \).
	Moreover, \( w \) and \( x \) can be found in polynomial time (although they are not unique).
	Given this decomposition of \( g \) we can compute its image under \( \Phi \) as \( \Phi(g) = w(g_1^0, \dots, g_k^0) \phi(x) \).
	It follows that we can compute \( \Phi \) in polynomial time, and it is easy to check if \( \Phi \) is an isomorphism by bruteforce.
\end{proof}

Now we derive the first part of Theorem~\ref{main} from the proposition above.
First we check that \( A \) and \( A_0 \) are isomorphic abelian groups, since there is no required isomorphism between \( G \) and \( G_0 \) if they are not.
Recall that \( G/A \) is \( k \)-generated, so we can find elements \( g_1, \dots, g_k \in G \) such that \( g_1A, \dots, g_kA \) generate \( G/A \) in
time polynomial in \( n^k \) by bruteforce. Next we bruteforce through all elements \( g^0_1, \dots, g^0_k \in G_0 \) such that
\( \psi(g_iA) = g^0_iA_0 \), \( i = 1, \dots, k \); note that there are at most \( |A|^k \) tuples of elements to consider.
For every such choice of \( g^0_1, \dots, g^0_k \) we apply Proposition~\ref{findis}. If the algorithm from Proposition~\ref{findis}
returns an isomorphism \( \Phi \) for at least one choice of elements \( g^0_1, \dots, g^0_k \), then we have found the required isomorphism.
It is clear that if \( \Phi \) exists, then the algorithm from Proposition~\ref{findis} will succeed for \( g^0_i = \Phi(g_i) \), \( i = 1, \dots, k \),
so our algorithm for the first part of Theorem~\ref{main} works correctly.

Note that if elements \( g_1, \dots, g_k \in G \) such that \( g_1A, \dots, g_kA \) generate \( G/A \) are given as a part of the input to the algorithm,
then we do not have to bruteforce for generators of \( G/A \) and the runtime of the algorithm can be bounded by a polynomial in \( n \) and \( |A|^k \).

\subsection{Computing the coset of isomorphisms}

As was mentioned in the introduction, to compute the coset of isomorphisms it suffices to
be able to compute \( \Aut_0(G, A) \), i.e.\ the subgroup of \( \Aut(G) \) which stabilizes \( A \) and acts trivially on \( G/A \).
We will reduce the problem of computing \( \Aut_0(G, A) \) to the following.

\begin{proposition}\label{gensfixed}
	Let \( G \) be a finite group given by its Cayley table.
	Suppose we are given a normal abelian subgroup \( A \) of \( G \), and elements \( g_1, \dots, g_k \in G \)
	such that \( g_1A, \dots, g_kA \) generate \( G/A \).
	Then we can find the group
	\[ C_{\overline{g}} = \{ \phi \in \Aut_0(G, A) \mid \phi(g_i) = g_i,\, i = 1, \dots, k \} \]
	in polynomial time.
\end{proposition}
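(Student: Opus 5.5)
Using Theorem~\ref{kgenext} with \( G_0 = G \), \( A_0 = A \) and \( g^0_i = g_i \), I will identify \( C_{\overline{g}} \) with a group of automorphisms of \( A \).

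First compute, exactly as in Step~2 of the proof of Proposition~\ref{findis}, a presentation of \( H = G/A \) on generators \( x_1, \dots, x_k \) with \( m = |H|^2 \) relators \( w_j \), and the elements \( u_j = w_j(\overline{g}) \in A \).

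Any \( \Phi \in C_{\overline{g}} \) restricts to some \( \phi \in \Aut(A) \). This restriction satisfies \( \phi(x^{g_i}) = \phi(x)^{g_i} \), since \( \Phi(g_i) = g_i \). It also satisfies \( \phi(u_j) = u_j \), since \( \Phi \) fixes each \( g_i \) and hence fixes every word in them. Conversely, Theorem~\ref{kgenext} says that every \( \phi \in \Aut(A) \) with these two properties extends uniquely to an automorphism \( \Phi \) of \( G \) with \( \Phi(g_i) = g_i \). Such a \( \Phi \) stabilizes \( A \) and acts trivially on \( G/A \), because the cosets \( g_iA \) generate \( G/A \). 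Restriction is injective on \( C_{\overline{g}} \): an element of \( C_{\overline{g}} \) is determined by its values on \( A \) and on the \( g_i \). Hence restriction is an isomorphism of \( C_{\overline{g}} \) onto
\[ S = \{ \phi \in K^\times \mid \phi(u_j) = u_j,\ j = 1, \dots, m \}, \]
where \( K \leq \End(A) \) is the ring of endomorphisms commuting with the action of every \( g_i \), as in Step~4.

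The algorithm therefore has three steps.
\begin{enumerate}
\item Compute additive generators of \( K \) by solving the linear Diophantine system of Step~4 with Proposition~\ref{diosolve}.
\item Compute permutation generators of \( K^\times \) on the set \( A \) with Corollary~\ref{cunits}.
\item Compute the pointwise stabilizer in \( K^\times \) of the points \( u_1, \dots, u_m \); this is \( S \). Pointwise stabilizers of permutation groups are polynomial-time computable (\cite{seress}).
\end{enumerate}
Finally, for each generator \( \phi \) of \( S \), construct its extension \( \Phi \) by the formula \( \Phi(w(\overline{g})x) = w(\overline{g})\phi(x) \), as in Step~5. These extensions generate \( C_{\overline{g}} \), and each is given as a permutation of \( G \). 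Every step is polynomial in \( n \), since there are at most \( n^2 \) relators and the generating sets have polynomial size.

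The main obstacle is the computation of generators of \( K^\times \), which is not an orbit–stabilizer calculation inside a known permutation group. Corollary~\ref{cunits} (Theorem~\ref{units}) handles it. The one thing to check is that \( K \) is a unitary subring whose additive generators come out of the Diophantine system in a form Corollary~\ref{cunits} accepts. Everything else is bookkeeping justified by Theorem~\ref{kgenext}, whose uniqueness clause gives the bijection.
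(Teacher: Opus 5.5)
Your proposal is correct and follows the paper's proof essentially step for step. You compute the presentation and the elements \(u_j\), obtain generators of \(K^\times\) via the Diophantine system and Corollary~\ref{cunits}, take the pointwise stabilizer of \(u_1,\dots,u_m\), and lift its generators to \(C_{\overline{g}}\) using the existence and uniqueness parts of Theorem~\ref{kgenext}, which is the same restriction/extension bijection the paper uses.
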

\begin{proof}
	Note that \( C_{\overline{g}} \) is a permutation group on \( G \), so it suffices to compute its generators.

	As in Step~2 of Section~\ref{findiso}, compute a presentation of \( H = G/A \) in terms of generators \( x_1, \dots, x_k \) and relators \( w_1, \dots, w_m \)
	such that \( g_iA \mapsto x_i \), \( i = 1, \dots, k \), induces an isomorphism between \( G/A \) and \( H \).
	Set \( u_j = w_j(g_1, \dots, g_k) \), \( j = 1, \dots, m \). Define
	\[ K = \{ \theta \in \Aut(A) \mid \theta(x^{g_i}) = \theta(x)^{g_i} \text{ for all } x \in A,\, i = 1, \dots, k \}. \]
	As in Step~4 of Section~\ref{findiso}, we can compute permutation generators of \( K \) in polynomial time
	by solving a system of linear Diophantine equations and applying Corollary~\ref{cunits}.
	Let \( C \) be the pointwise stabilizer of \( u_1, \dots, u_m \) in \( K \), note that it can be computed in polynomial time.

	Notice that for any \( \phi \in C_{\overline{g}} \) the restriction of \( \phi \) on \( A \) lies in \( C \).
	In the other direction, for every \( \theta \in C \), Theorem~\ref{kgenext} implies that there exists a unique map \( \tilde{\theta} \in C_{\overline{g}} \)
	which induces \( \theta \) on \( A \) and can be defined by the formula \( \tilde{\theta}(w(g_1, \dots, g_k)x) = w(g_1, \dots, g_k)\theta(x) \)
	for all \( x \in A \) and any word \( w \).

	Let \( \theta_1, \dots, \theta_r \) be generators of \( C \).
	It follows from the paragraph above that \( \tilde{\theta_1}, \dots, \tilde{\theta_r} \) generate \( C_{\overline{g}} \),
	and since all the required computations can be performed in polynomial time, the proposition is proved.
\end{proof}

In order to find \( \Aut_0(G, A) \), fix some elements \( g_1, \dots, g_k \in G \) such that \( g_1A, \dots, g_kA \) generate \( G/A \).
For \( g^0_1, \dots, g^0_k \in G \) let \( \phi_{\overline{g}, \overline{g^0}} \)
denote an element \( \phi \) of \( \Aut(G) \) satisfying \( \phi(A) = A \) and \( \phi(g_i) = \phi(g^0_i) \), \( i = 1, \dots, k \), if it exists.
Now,
\[ \Aut_0(G, A) = \bigcup_{g^0_1, \dots, g^0_k \in G} C_{\overline{g}} \cdot \phi_{\overline{g}, \overline{g^0}}, \]
where the union is taken over all elements \( g^0_1, \dots, g^0_k \in G \) such that \( g^0_i \in g_iA \), \( i = 1, \dots, k \),
and \( \phi_{\overline{g}, \overline{g^0}} \) exists. Note that there are polynomially many cosets in this union (at most \( |A|^k \)).
We can compute \( C_{\overline{g}} \) in polynomial time by Proposition~\ref{gensfixed}, and we can find representatives
\( \phi_{\overline{g}, \overline{g^0}} \) in polynomial time by Proposition~\ref{findis}.
It follows that we can compute \( \Aut_0(G, A) \) in polynomial time, Theorem~\ref{main} is proved.

Note that as in Section~\ref{findiso}, if elements \( g_1, \dots, g_k \in G \) such that \( g_1A, \dots, g_kA \) generate \( G/A \)
are given as a part of the input to the algorithm, then the runtime of the algorithm can be bounded by a polynomial in \( n \) and \( |A|^k \).

\section{Proof of Theorem~\ref{ctower}}\label{stower}

\begin{proposition}\label{findcf}
	Let \( G \) be a finite group given by its Cayley table.
	One can find all normal subgroups \( A \) of \( G \) such that \( G/A \) is cyclic or simple in polynomial time.
\end{proposition}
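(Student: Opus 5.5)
We treat the cyclic and the simple case separately. In both cases the point is that such a normal subgroup \( A \) is determined by very little data, which we can enumerate in time polynomial in \( n = |G| \).

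\emph{Cyclic quotients.} If \( G/A \) is cyclic, then \( A \) contains the derived subgroup \( G' \), and \( G/A \) is a quotient of the abelian group \( G/G' \). Moreover, \( G/A \) is generated by the image of a single element \( g \in G \), so
\[ A = \langle G', g^m \rangle \cdot N, \]
where we still have to pin down \( N \) correctly. Instead we proceed directly: \( A \) is the kernel of an epimorphism \( G \to \mathbb{Z}_d \), determined by \( d \) and the images of the elements. A cleaner route is the following. Compute \( G' \) and the quotient \( Q = G/G' \) by its Cayley table. Normal subgroups \( A \geq G' \) with \( G/A \) cyclic correspond to subgroups \( B \leq Q \) with \( Q/B \) cyclic. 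Each such \( B \) is the kernel of a surjection \( Q \to \mathbb{Z}_d \) with \( d \mid |Q| \), and the kernel is determined by the character \( \chi \in \mathrm{Hom}(Q, \mathbb{C}^\times) \) up to generating the same cyclic subgroup. Since \( |\widehat{Q}| = |Q| \le n \), we can enumerate all homomorphisms \( Q \to \mathbb{Z}_{\exp(Q)} \) in polynomial time. To do this, compute a cyclic decomposition of \( Q \) as in Section~\ref{sprem}; a homomorphism is then given by images of the at most \( \log n \) cyclic generators, and there are at most \( |Q| \) of them in total. For each homomorphism we take its kernel and pull it back to \( G \). Removing duplicates, we obtain at most \( n \) subgroups.

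\emph{Simple quotients.} If \( G/A \) is simple, then \( A \) is a maximal normal subgroup. If \( G/A \) is abelian simple, it is cyclic of prime order and is already covered by the first case. For nonabelian simple quotients we use the following fact: every normal subgroup is the normal closure of at most \( \log n \) elements, but more usefully, \( A \) is maximal normal. Take any \( g \notin A \). Then the normal closure \( \langle g^G \rangle \) is not contained in \( A \), so \( \langle g^G \rangle A = G \). This does not immediately give \( A \), so we instead compute the set of all minimal normal subgroups of each quotient iteratively. We prefer a direct characterization: the maximal normal subgroups of \( G \) are exactly the maximal elements among the normal closures \( \langle x^G \rangle \) over \( x \in G \), together with products.

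The most workable plan is via the chief structure. Normal subgroups are unions of conjugacy classes. A maximal normal \( A \) is determined by the following greedy procedure: starting from \( A \), for any class \( C \not\subseteq A \) we have \( \langle A, C \rangle = G \). Hence \( A \) equals the largest normal subgroup not containing any given element \( g \notin A \), provided this largest subgroup is unique. For nonabelian simple \( G/A \) this uniqueness fails in general, so we use a counting bound instead. The number of normal subgroups \( A \) with \( G/A \) nonabelian simple is at most \( \log n \), since the intersection of \( r \) distinct such subgroups has quotient isomorphic to a product of \( r \) simple groups. We then find them recursively. Compute a minimal normal subgroup \( N \) of \( G \), which is polynomial: take the normal closure of an element and shrink it by trying all elements inside. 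Then recurse on \( G/N \) to obtain all maximal normal subgroups of \( G/N \), and pull them back. The maximal normal subgroups not containing \( N \) must be complements to \( N \) with \( N \) simple isomorphic to \( G/A \), and in that case \( A = C_G(N) \) when \( N \) is nonabelian. So we add \( C_G(N) \) and check whether it has simple quotient.

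The main obstacle is making this recursion polynomial. The recursion depth is at most \( \log n \), and each level produces at most \( \log n \) candidates plus one new one, so the total work is polynomial. Each candidate is finally verified by computing \( G/A \) and testing simplicity by brute force over normal closures of its elements.
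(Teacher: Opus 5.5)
Your proposal is correct. The cyclic part is the paper's argument: pass to \(Q = G/G'\), use a cyclic decomposition to list the \(|Q|\) homomorphisms \(Q \to \mathbb{Z}_{\exp(Q)}\), then take kernels and pull back.

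For simple quotients you take a genuinely different route. The paper just cites existing polynomial-time algorithms (Lemma~7.4 of Babai's paper, or Section~3.1 of Seress's book). You instead give a self-contained recursion on a minimal normal subgroup \(N\), and your final argument works. If \(G/A\) is nonabelian simple, there are two cases:
\begin{itemize}
\item If \(N \leq A\), then \(A/N\) is found by recursing on \(G/N\).
\item Otherwise \(NA = G\) by maximality of \(A\), and \(N \cap A = 1\) by minimality of \(N\). So \(G = N \times A\) and \(N \simeq G/A\) is nonabelian simple. Then \(A \leq C_G(N)\) and \(C_G(N) \cap N = Z(N) = 1\), which forces \(A = C_G(N)\).
\end{itemize}
Since \(|G/N| \leq |G|/2\), the recursion has depth at most \(\log n\) and adds at most one new subgroup per level. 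Each level is polynomial in \(n\): find \(N\) by shrinking normal closures, compute \(C_G(N)\), and test simplicity of the quotient via normal closures.

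What your route buys is an elementary argument that exploits brute force over the Cayley table. It also makes explicit the bound of at most \(\log n\) such subgroups; that bound already follows from the recursion, so your separate product-of-simple-groups count is unnecessary. What the paper's citation buys is brevity, and the cited algorithms also work for permutation groups in time polynomial in the degree.

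You should prune the write-up.
\begin{itemize}
\item Delete the unfinished formula \(A = \langle G', g^m \rangle \cdot N\) and the other abandoned attempts.
\item Delete the ``direct characterization'' of maximal normal subgroups via maximal normal closures. Read literally it is false: in \(\mathbb{Z}_2 \times \mathbb{Z}_4\), the subgroup \(\langle (1,0) \rangle\) is a maximal proper cyclic subgroup but not a maximal subgroup. Once arbitrary products are allowed, it has no algorithmic content.
\item The recursive call must return only the normal subgroups of \(G/N\) with \emph{nonabelian} simple quotient, not ``all maximal normal subgroups''. An elementary abelian group of order \(2^r\) has \(2^r - 1\) maximal subgroups, so the literal version is not polynomial. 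Abelian simple quotients are already covered by your cyclic case.
\end{itemize}
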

\begin{proof}
	To enumerate simple factors one can use the algorithm in \cite[Lemma~7.4]{babainc},
	see also~\cite[Section~3.1~(o) and (p)]{seress}, so it suffices to find all \( A \)
	such that \( G/A \) is cyclic. Since every such \( A \) contains \( [G, G] \), we may replace \( G \) by \( G/[G, G] \) and assume that \( G \) is abelian.

	Let \( e \) be the exponent of \( G \); note that it suffices to compute all homomorphisms from \( G \) to \( \mathbb{Z}_e \).
	Decompose \( G \) into a direct product of cyclic groups \( C_1, \dots, C_k \), and compute all homomorphisms
	from \( C_i \) to \( \mathbb{Z}_e \), \( i = 1, \dots, k \). Every such choice defines a unique homomorphism from \( G \) to \( \mathbb{Z}_e \),
	and every required homomorphism can be obtained that way.
	There are exactly \( |G| \) such homomorphisms, so the algorithm works in polynomial time.
\end{proof}
\begin{corollary}\label{findall}
	Let \( G \) be a finite group of order \( n \) given by its Cayley table and let \( k \geq 1 \) be an integer.
	In time polynomial in \( n^k \) one can find all normal subgroups \( A \) of \( G \) such that \( G/A \) has a subnormal series of length \( k \) with
	cyclic or simple factors.
\end{corollary}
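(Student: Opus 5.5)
We argue by induction on \( k \), applying Proposition~\ref{findcf} repeatedly. A subnormal series of length \( k \) with cyclic or simple factors for \( G/A \) has the form \( A = B_k \unlhd B_{k-1} \unlhd \dots \unlhd B_0 = G \), where each \( B_{i+1} \) is normal in \( B_i \) (but not necessarily in \( G \)) and \( B_i/B_{i+1} \) is cyclic or simple. So we enumerate chains of subgroups rather than normal subgroups of \( G \). Start from \( \mathcal{L}_0 = \{ G \} \). Given the list \( \mathcal{L}_i \) of all subgroups \( B \) that arise as the \( i \)-th term of such a chain, for each \( B \in \mathcal{L}_i \) compute its Cayley table (a subtable of that of \( G \)) and apply Proposition~\ref{findcf} to \( B \). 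This yields all \( N \unlhd B \) with \( B/N \) cyclic or simple, and we put these into \( \mathcal{L}_{i+1} \), removing duplicates. Finally, output those members of \( \mathcal{L}_k \) that are normal in \( G \); normality is checked by brute force in time polynomial in \( n \). If series of length at most \( k \) are also meant to count, we take the union over all \( i \leq k \), or pad the series with trivial factors, which are cyclic.

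Correctness is immediate. Every subnormal series of the required kind is obtained by some sequence of choices in this procedure, and every output subgroup visibly has such a series.

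The main point is the running time, namely bounding \( |\mathcal{L}_i| \). I would show that a fixed subgroup \( B \) (say of order \( m \)) has at most \( m \) normal subgroups \( N \) with \( B/N \) cyclic or simple.
\begin{itemize}
\item \emph{Cyclic quotients.} Such \( N \) correspond to kernels of homomorphisms \( B/[B,B] \to \mathbb{Z}_e \), and there are at most \( |B/[B,B]| \leq m \) of these homomorphisms, as in the proof of Proposition~\ref{findcf}.
\item \emph{Nonabelian simple quotients.} Each such \( N \) is a maximal normal subgroup, and distinct such \( N \) give distinct minimal normal subgroups of \( B/\bigcap N \). A cruder bound also suffices: \( B \) has at most \( m \) normal subgroups whose quotient is a given nonabelian simple group, and in any case the enumeration algorithm of~\cite[Lemma~7.4]{babainc} runs in polynomial time, so its output has polynomial size.
\end{itemize}
Hence \( |\mathcal{L}_{i+1}| \leq \mathrm{poly}(n) \cdot |\mathcal{L}_i| \). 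Simply multiplying gives \( |\mathcal{L}_k| \leq \mathrm{poly}(n)^k \), which is polynomial in \( n^k \) as required, and each step takes time polynomial in \( n \) per subgroup.

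The obstacle I expect is making the per-step bound genuinely of the form \( n^{c} \) with \( c \) an absolute constant, so that the product is \( n^{O(k)} \). This relies on the exact polynomial running time of the subroutine in Proposition~\ref{findcf}. For safety I would use the sharper bound: the number of normal subgroups \( N \) of \( B \) with \( B/N \) cyclic or simple is at most \( |B| \leq n \). For cyclic quotients this was shown above. For nonabelian simple quotients, the intersection of the \( r \) distinct such kernels gives a quotient that is a direct product of \( r \) nonabelian simple groups, so \( 2^r \leq 60^r \leq n \). This yields \( |\mathcal{L}_k| \leq (2n)^k \) and a total time polynomial in \( n^k \).
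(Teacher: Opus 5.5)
Your proposal is correct and follows the paper's proof: apply Proposition~\ref{findcf} recursively \( k \) times to enumerate all chains \( N_k \unlhd \dots \unlhd N_0 = G \) with cyclic or simple factors, then keep those \( N_k \) that are normal in \( G \). The paper additionally checks that \( N_k \) is abelian, which is needed later for Theorem~\ref{ctower}. Your explicit bound of at most \( 2|B| \) such normal subgroups per step comes from the Hom count for cyclic quotients and the direct-product argument for nonabelian simple quotients, and it simply makes precise the \( n^{O(k)} \) count that the paper leaves implicit.
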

\begin{proof}
	By applying Proposition~\ref{findcf} recursively \( k \) times, in time polynomial in \( n^k \)
	we can enumerate all subnormal series \( N_k \unlhd N_{k-1} \unlhd \dots \unlhd N_0 = G \) where \( N_i/N_{i+1} \), \( i = 0, \dots, k-1 \), is cyclic or simple. 
	One can easily check if \( N_k \) is abelian and normal in \( G \) in polynomial time, so the claim follows.
\end{proof}

\noindent
\emph{Proof of Theorem~\ref{ctower}.}
By Corollary~\ref{findall}, we can find a normal abelian subgroup \( A \) of \( G \) such that
\( G/A \) has a subnormal series of length \( k \) with cyclic or simple factors. Similarly, we can list all abelian normal subgroups \( A_0 \)
of \( G_0 \), such that \( G_0/A_0 \) has a subnormal series of length \( k \) with cyclic or simple factors.
Note that \( G/A \) and \( G_0/A_0 \) are \( 2k \)-generated.

For each choice of \( A_0 \) we list all isomorphisms \( \psi : G/A \to G_0/A_0 \) (if they exist) by bruteforce in time polynomial in \( n^{2k} \)
and apply Theorem~\ref{main} to find the coset of isomorphisms. The union of at most \( n^{2k} \) of these cosets will be the set of all
isomorphisms from \( G \) to \( G_0 \). In order to express this union of cosets as a \( \Aut(G) \)-coset,
it suffices to compute \( \Aut(G) \) as a permutation group on \( G \).

To do that, we apply the above procedure to \( G_0 = G \) and obtain \( \Aut(G) \) as a union of cosets
\( H_1\phi_1 \cup \dots \cup H_r\phi_r \), where \( H_i \leq \Aut(G) \), \( \phi_i \in \Aut(G) \), \( i = 1, \dots, r \), \( r \leq n^{2k} \).
Now the union of generators of \( H_1, \dots, H_r \) together with \( \phi_1, \dots, \phi_r \) generate \( \Aut(G) \) as a permutation group.
\qed

\section{Acknowledgements}

The author expresses his gratitude to A.V.~Vasil'ev and I.N.~Ponomarenko for fruitful discussions on the topic of this paper and many useful suggestions.

The work is supported by the Russian Science Foundation, project 24-11-00127, \url{https://rscf.ru/en/project/24-11-00127/}.

\bigskip

\noindent
\emph{Saveliy V. Skresanov}

\noindent
\emph{Novosibirsk State University, 1 Pirogova St.,}

\noindent
\emph{Sobolev Institute of Mathematics, 4 Acad. Koptyug avenue,\\ 630090 Novosibirsk, Russia}

\noindent
\emph{Email address: skresan@math.nsc.ru}

\end{document}